\newtheorem{theorem}{Theorem}
\newtheorem{lemma}{Lemma}
\newtheorem{remark}{Remark}
\newtheorem{definition}{Definition}
\newtheorem{proposition}{Proposition}
\begin{document}

\title{\bf Efficient sparse semismooth Newton methods for the clustered lasso problem\footnotemark[1]}
\author{Meixia Lin\footnotemark[2], \quad Yong-Jin Liu\footnotemark[3], \quad Defeng Sun\footnotemark[4], \quad Kim-Chuan Toh\footnotemark[5]}
\date{May 1, 2019}
\maketitle

\renewcommand{\thefootnote}{\fnsymbol{footnote}}
\footnotetext[1]{{\bf Funding:} The research of Yong-Jin Liu was supported in part by the National Natural Science Foundation of China under Grants 11371255 and 11871153, the research of Defeng Sun was supported in part by a start-up research grant from the Hong Kong Polytechnic University, and the research of Kim-Chuan Toh was supported in part by the Ministry of Education, Singapore, Academic Research Fund under Grant R-146-000-257-112.}
\footnotetext[2]{Department of Mathematics, National University of Singapore, 10 Lower Kent Ridge Road, Singapore ({\tt lin\_meixia@u.nus.edu}).}
\footnotetext[3]{Key Laboratory of Operations Research and Control of Universities in Fujian, College of Mathematics and Computer Science, Fuzhou University, Fuzhou 350116, China ({\tt yjliu@fzu.edu.cn}).}
\footnotetext[4]{Department of Applied Mathematics, The Hong Kong Polytechnic University, Hung Hom, Hong Kong ({\ttfamily defeng.sun@polyu.edu.hk}).}
\footnotetext[5]{Department of Mathematics and Institute of Operations Research and Analytics, National University of Singapore, 10 Lower Kent Ridge Road, Singapore ({\tt mattohkc@nus.edu.sg}).}
\renewcommand{\thefootnote}{\arabic{footnote}}

\begin{abstract}
	We focus on solving the clustered lasso problem, which is a least squares problem with the $\ell_1$-type penalties imposed on both the coefficients and their pairwise differences to learn the group structure of the regression parameters. Here we first reformulate the clustered lasso regularizer as a weighted ordered-lasso regularizer, which is essential in reducing the computational cost from $O(n^2)$ to $O(n\log (n))$. We then propose an inexact semismooth Newton augmented Lagrangian ({\sc Ssnal}) algorithm to solve the clustered lasso problem or its dual via this equivalent formulation, depending on whether the sample size is larger than the dimension of the features.
	An essential component of the {\sc Ssnal} algorithm is the computation of the generalized Jacobian of the proximal mapping of the clustered lasso regularizer. Based on the new formulation, we derive an efficient procedure for its computation.
	Comprehensive results on the global convergence and local linear convergence  of the {\sc Ssnal} algorithm are established. For the purpose of exposition and comparison, we also summarize/design several first-order methods that can be used to solve the problem under consideration, but with the key improvement from the new formulation of the clustered lasso regularizer. As a demonstration of the applicability of our algorithms, numerical experiments on the clustered lasso problem  are performed. The experiments show that the {\sc Ssnal} algorithm substantially outperforms the best alternative algorithm for the clustered lasso problem.
\end{abstract}

\medskip
\noindent
{\bf Keywords:} 	Clustered lasso, augmented Lagrangian method, semismooth Newton method, convex minimization
\\[5pt]
{\bf AMS subject classification:} 90C06, 90C25, 90C90

\section{Introduction}
We consider the minimization problem of the following form:
\begin{equation}\label{given-P}
\min_{x\in \Re^n} \ \displaystyle \Big\{ \frac{1}{2}\|Ax-b\|^2+\beta \|x\|_1 +\rho\sum_{1\leq i<j \leq n} |x_i-x_j|\Big\},
\end{equation}
where $A\in \Re^{m\times n}$, $b\in \Re^m$ are given data and $\beta,\rho> 0 $ are given positive parameters. For $x\in \Re^n$, $\|x\|_1 = \sum_{i=1}^n |x_i|$. Obviously, {\em the optimal solution set of problem \eqref{given-P}, denoted as $\Omega_p$, is nonempty and bounded.} Problems of the form \eqref{given-P} are called the clustered lasso problems, which are motivated by the desire to learn the group structure of the regression parameters $\{x_i\}$ in the statistical context \cite{she2010sparse,petry2011pairwise}. Two types of sparsity are desirable: zero-sparsity and equi-sparsity. The clustered lasso model is proposed with the $\ell_1$-type penalties imposed on both the coefficients and their pairwise differences.

It is worthwhile to mention several other popular models for group sparsity of the regression parameters. The fused lasso model \cite{tibshirani2005sparsity,ye2011split,liu2010efficient} penalizes the differences between the adjacent predictors, which was developed for ordered predictors. The group lasso model \cite{yuan2006model,friedman2010note,jacob2009group} assumes that the grouping of the predictors is known, say from the underlying background, and then penalizes the $\ell_2$-norm of the coefficients within the same predictor group. The OSCAR model \cite{bondell2008simultaneous,zhong2012efficient} penalizes the combination of the $\ell_1$-norm and a pairwise $\ell_{\infty}$-norm for the coefficients. OSCAR is similar to the clustered lasso since it seeks zero-sparsity and equi-sparsity in $\{|x_i|\}$. All these models are extended from the original Lasso model \cite{tibshirani1996regression,wen2010fast,wright2009sparse} to obtain minimal prediction error and also to recover the true underlying specific structure of the model.

The clustered lasso model has been applied in microarray data analysis. Besides, the clustered lasso can be used as a pre-processing step for the fused lasso or the group lasso for uncovering the group structure of the predictors. Researchers have designed some algorithms for solving \eqref{given-P} through reformulating \eqref{given-P} as a constrained Lasso problem by introducing new variables in \cite{she2010sparse,petry2011pairwise,tang2016fused}. Unfortunately, these methods can be hardly applied to the large-scale problems due to huge computational cost.

In real applications, one may need to run the clustered lasso problem \eqref{given-P} many times with different $(\beta,\rho)$ when tuning parameters to get reasonable sparsity structure of the predictors. Therefore, it is important for us to design an efficient and robust algorithm, especially for the high-dimensional and/or high-sample cases. In order to achieve fast convergence, we aim to solve the clustered lasso problem by designing a method which exploits the second order information. Specifically, we will design a semismooth Newton augmented Lagrangian method, which has already been demonstrated to be extremely efficient for Lasso \cite{li2016highly}, fused lasso \cite{li2018efficiently}, group lasso \cite{zhang2017efficient} and OSCAR \cite{luo2018solving}.

The main contributions of our paper can be summarized as follows.
\begin{enumerate}
	\item We reformulate the clustered lasso regularizer as a weighted ordered-lasso regularizer, which is crucial to reducing the cost of computing the regularizer from $O(n^2)$ to $O(n\log(n))$ operations. Based on this reformulation, we are able to compute the proximal mapping of the clustered lasso regularizer by using the pool-adjacent-violators algorithm in $O(n\log(n))$ operations. As far as we are aware of, this is the first time that the proximal mapping of the clustered lasso regularizer is shown to be computable in $O(n\log(n))$ operations.
	
	\item The new formulation is also critical for us to obtain a well-structured generalized Jacobian of the corresponding proximal mapping so that it can be computed explicitly and efficiently with the structure to be mentioned in Section \ref{sub_computable_M}.
	
	\item We propose a semismooth Newton augmented Lagrangian ({\sc Ssnal}) method for solving problem \eqref{given-P} or its dual depending on whether the sample size is larger than the dimension of the features. Since the objective function in \eqref{given-P} is piecewise linear-quadratic, the augmented Lagrangian method ({\sc Alm}) is proved to have the asymptotic superlinear convergence property according to \cite{rockafellar1976augmented,rockafellar1976monotone,li2016highly}. For the {\sc Alm} subproblem, we employ a semismooth Newton method that exploits the second-order sparsity of the generalized Jacobian of the proximal mapping of the clustered lasso regularizer to get fast superlinear or even quadratic convergence.
	
	\item As the first-order methods have been very popular in solving various lasso-type problems in recent years,  we summarize two first-order algorithms which can be used to solve problem \eqref{given-P}. The computation of the key projection step is highly improved due to our new formulation of the clustered lasso regularizer.
	
	\item We conduct comprehensive numerical experiments to demonstrate the efficiency and robustness of the {\sc Ssnal} method against different parameter settings. We also demonstrate the superior performance of our algorithm over other first-order methods for large-scale instances with $n\gg m$.
\end{enumerate}

The remaining parts of this paper are organized as follows. The next section is devoted to computing and analyzing the proximal mapping of the clustered lasso regularizer and its generalized Jacobian. In Sections \ref{sect:ssnal} and \ref{sect:ssnalP}, we develop semismooth Newton based augmented Lagrangian algorithms to solve the clustered lasso problem and its dual problem, respectively. We employ various numerical techniques to efficiently exploit the second-order sparsity and special structure of the generalized Jacobian when implementing the {\sc Ssnal} algorithms. For the purpose of evaluating the efficiency of our {\sc Ssnal} algorithms, in  Section \ref{sect:FOMs} we summarize two first-order algorithms which are conducive for solving the general problem \eqref{given-P}. By using the proposed proximal mapping of the clustered lasso regularizer to be given in Section \ref{subsection_prox_mapping}, one can compute the key projection step in these two  first-order methods efficiently in $O(n\log (n))$ operations. This is already a significant improvement over the current methods in \cite{she2010sparse,petry2011pairwise,tang2016fused}, which require $O(n^2)$ to just evaluate the clustered lasso regularizer.
The numerical performance of our {\sc Ssnal} algorithms for the clustered lasso problems on large scale real data and synthetic data against other state-of-the-art algorithms are presented in Section \ref{sect:NumRes}. We conclude our paper in the final section.

\bigskip
\noindent{\bf Notation.}
Throughout the paper, we use ``${\rm diag}(X)$" to denote the vector consisting of the diagonal entries of the matrix $X$ and ``${\rm Diag}(x)$" to denote the diagonal matrix whose diagonal is given by the vector $x$. We denote by $I_n$, ${\bf O}_n$, and ${\bf E}_n$ the $n\times n$ identity matrix, the $n\times n$ zero matrix, and the $n\times n$ matrix of all ones, respectively. For given matrix $C$, we also use $C^{\dagger}$ to represent its Moore-Penrose inverse. As usual, $f^*$ is the Fenchel conjugate  of an arbitrary function $f$.

\section{Computing the proximal mapping of the clustered lasso regularizer and its generalized Jacobian}\label{mapping_and_jacobian}

For convenience, we denote the clustered lasso regularizer in \eqref{given-P} by
\begin{equation*}
p(x)=\beta \|x\|_1+\rho \sum_{1\leq i<j \leq n} |x_i-x_j|,\ \forall x\in\Re^n.
\end{equation*}
Let $f:\Re^n \rightarrow (-\infty,\infty]$ be  any given proper closed convex function. Then, the proximal mapping ${\rm Prox}_{f}(\cdot)$ of $f$ is defined as
\[
{\rm Prox}_f(y)=\underset{x\in\Re^n}{\rm argmin}\Big\{  \frac{1}{2}\|x-y\|^2+f(x) \Big\},\ \forall y\in \Re^n.
\]
We have the following important Moreau's identity:
\[
{\rm Prox}_{tf}(x)+t{\rm Prox}_{f^*/t}(x/t)=x,
\]
where $t>0$ is a given parameter.

In this section, we shall develop some useful results on calculating the proximal mapping of the clustered lasso regularizer $p(\cdot)$ and the corresponding generalized Jacobian.

\subsection{The computation of the proximal mapping ${\rm Prox}_p(\cdot)$} \label{subsection_prox_mapping}

Denote
\[
S_{\rho}(y):=\underset{x\in\Re^n}{\rm argmin} \ \displaystyle\Big\{ \frac{1}{2}\|x-y\|^2 + \rho \sum_{1\leq i<j \leq n} |x_i -x_j|   \Big\},\ \forall y\in \Re^n,
\]
and ${\cal D}=\{x\in \Re^n \mid Bx\geq 0\}$, where $B$ is a matrix such that $Bx=[x_1-x_2;\cdots;x_{n-1}-x_n]\in \Re^{n-1}$.

We shall reformulate the clustered lasso regularizer as a weighted ordered-lasso regularizer, which enables us to reduce the cost of computing the regularizer from $O(n^2)$ to $O(n\log(n))$ operations.
For any $x\in \Re^n$, we define $x^\downarrow$ to be the vector whose components are those of $x$ sorted in a non-increasing order, i.e. $x^\downarrow_1\geq x^\downarrow_2\geq \cdots \geq x^\downarrow_n$.

\begin{proposition}\label{reformulate_g}
	Let $x\in \Re^n$ be an arbitrarily given vector.  Then it  holds that
	\[
	g(x) \;:=\;\sum_{1\leq i < j \leq n} |x_i -x_j| \; =  \;   \langle w,x^\downarrow \rangle,
	\]
	where the vector $w\in\Re^n$ is defined by
	\begin{equation}\label{eq:w}
	w_k= n-2k+1, \ k=1,\cdots,n.
	\end{equation}
\end{proposition}
\begin{proof}
	By noting that $g(x) = g(Px)$ for any permutation matrix $P$, one has that
	\begin{equation*}
	\begin{split}
	g(x)
	&= \sum_{1\leq i < j \leq n} |x^\downarrow_i -x^\downarrow_j|\ = \ \sum_{1\leq i < j \leq n} (x^\downarrow_i -x^\downarrow_j)
	\\
	&=  \sum_{i=1}^{n-1} (n-i) x^\downarrow_i - \sum_{j=2}^n (j-1) x^\downarrow_j=\sum_{k=1}^n  (n-2k+1) x^\downarrow_k,
	\end{split}
	\end{equation*}
	which completes the proof.
\end{proof}
\begin{remark}
	As a side note, the result in Proposition \ref{reformulate_g} is {\bf not} valid for a nonuniformly weighted sum.
\end{remark}

The next proposition shows that if a vector $y\in\Re^n$ is sorted in a non-increasing order, $S_{\rho}(y)$ can be computed by a single metric projection onto ${\cal D}$.

\begin{proposition} \label{sorty_S}
	Suppose that $y\in\Re^n$ is given such that $y_1\geq y_2 \geq \cdots\geq y_n$. Then it holds
	\begin{equation*}
	S_{\rho}(y)=\Pi_{\cal D}(y-\rho w),
	\end{equation*}
	where $w\in\Re^n$ is given in (\ref{eq:w}). The metric projection onto ${\cal D}$ can be computed via the pool-adjacent-violators algorithm \cite{best1990active}.
\end{proposition}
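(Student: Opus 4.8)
The plan is to reduce the computation of $S_{\rho}(y)$ to a single metric projection by combining Proposition \ref{reformulate_g} with the order structure of $y$. The crucial first step I would establish is that, when $y_1\geq\cdots\geq y_n$, the (unique) minimizer $x^{*}=S_{\rho}(y)$ is itself sorted in non-increasing order, i.e. $x^{*}\in{\cal D}$. To see this, let $\tilde{x}$ denote the vector obtained by rearranging the entries of $x^{*}$ into non-increasing order. Since $g$ is invariant under permutations of its argument, $g(\tilde{x})=g(x^{*})$, while the rearrangement inequality gives $\langle \tilde{x},y\rangle\geq\langle x^{*},y\rangle$ (both vectors share the same multiset of entries, and $y$ is already sorted), hence $\|\tilde{x}-y\|^2\leq\|x^{*}-y\|^2$. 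Thus $\tilde{x}$ attains an objective value no larger than that of $x^{*}$; because the objective is strictly convex and therefore has a unique minimizer, we must have $x^{*}=\tilde{x}$, which proves $x^{*}\in{\cal D}$.

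Given this, I would restrict the minimization defining $S_{\rho}(y)$ to the closed convex cone ${\cal D}$. Since the unconstrained minimizer already lies in ${\cal D}$, it coincides with the minimizer over ${\cal D}$; that is, $S_{\rho}(y)=\mathop{\rm argmin}_{x\in{\cal D}}\{\tfrac{1}{2}\|x-y\|^2+\rho g(x)\}$. For every $x\in{\cal D}$ one has $x^\downarrow=x$, so Proposition \ref{reformulate_g} yields $g(x)=\langle w,x^\downarrow\rangle=\langle w,x\rangle$. Therefore the regularizer becomes linear on the feasible set, and the problem reduces to $\min_{x\in{\cal D}}\{\tfrac{1}{2}\|x-y\|^2+\rho\langle w,x\rangle\}$.

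Finally, I would complete the square to write $\tfrac{1}{2}\|x-y\|^2+\rho\langle w,x\rangle=\tfrac{1}{2}\|x-(y-\rho w)\|^2+c$, where $c$ is a constant independent of $x$. Hence the argmin over ${\cal D}$ is exactly $\Pi_{\cal D}(y-\rho w)$, which is the claimed identity. The statement about computing the projection via the pool-adjacent-violators algorithm then follows by recognizing that projecting onto ${\cal D}=\{x\mid Bx\geq 0\}$ is an isotonic (monotone) regression problem, for which that algorithm is the standard $O(n\log n)$ solver, as cited.

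The main obstacle I anticipate is the first step, namely rigorously justifying that the minimizer inherits the ordering of $y$. The reduction and the completion of the square are routine once that monotonicity is in hand; everything hinges on combining the permutation invariance of $g$ with the rearrangement inequality and the uniqueness coming from strict convexity of the quadratic term.
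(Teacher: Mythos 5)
Your proposal is correct and follows essentially the same route as the paper: establish that the minimizer $x^{*}=S_{\rho}(y)$ is itself non-increasingly ordered (hence lies in ${\cal D}$), then use Proposition \ref{reformulate_g} to replace $g$ by the linear function $\langle w,\cdot\rangle$ on ${\cal D}$ and complete the square to identify $S_{\rho}(y)$ with $\Pi_{\cal D}(y-\rho w)$. The only cosmetic difference is in the first step, where you invoke the rearrangement inequality on the full sorted vector while the paper swaps a single out-of-order pair and derives a contradiction from uniqueness; both are instances of the same idea.
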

\begin{proof}
	Let $g(\cdot)$ be defined in Proposition \ref{reformulate_g}.
	We first note that $g(x) = g(Px)$ for any permutation matrix $P$ and $x\in \Re^n$. For convenience, let $x^* = S_{\rho}(y)$. Next we show that the components of $x^*$ must be arranged in a non-increasing order. Suppose on the contrary that there exists $i<j$ such that $x_i^* < x_j^*$. We define $\bar{x}\in\Re^n$ by $\bar{x}_i = x_j^*$, $\bar{x}_j = x_i^*$, $\bar{x}_k = x_k^*$ for all $k\not = i,j$. Then, we derive that
	\begin{equation*}
	\begin{split}
	&\frac{1}{2}\|x^* - y\|^2 + \rho g(x^*)- \Big(\frac{1}{2}\|\bar{x}-y\|^2 + \rho g(\bar{x})\Big)
	\\
	&=\frac{1}{2} \Big( (x^*_i - y_i)^2 + (x^*_j - y_j)^2 - (x^*_j - y_i)^2 - (x^*_i - y_j)^2 \Big)
	=(x_j^*-x_i^*) ( y_i - y_j) \geq 0,
	\end{split}
	\end{equation*}
	which implies that $\bar{x}$ is also a minimizer. By the uniqueness of the minimizer, we have that $\bar{x}=x^*$ and hence
	$x^*_j=\bar{x}_i=x^*_i$, which is a contradiction. Hence, we obtain that
	\begin{equation*}
	\begin{split}
	x^* &= \underset{x\in \Re^n}{\rm argmin} \ \displaystyle \Big\{ \frac{1}{2}\|x-y\|^2 + \rho g(x) \mid x_1 \geq x_2 \geq \cdots \geq x_n \Big\}
	\\
	&= \underset{x\in \Re^n}{\rm argmin} \  \displaystyle \Big\{ \frac{1}{2}\|x-y\|^2 + \rho \langle w,x\rangle \mid x_1 \geq x_2 \geq \cdots \geq x_n \Big\}
	\\
	&= \underset{x\in \Re^n}{\rm argmin} \  \displaystyle \Big\{ \frac{1}{2}\|x-(y-\rho w)\|^2
	\mid x_1 \geq x_2 \geq \cdots \geq x_n \Big\}
	\\
	&=\Pi_{\cal D}(y-\rho w).
	\end{split}
	\end{equation*}
	The proof is complete.
\end{proof}

Combining Proposition \ref{reformulate_g} with Proposition \ref{sorty_S}, we can get an explicit formula for $S_{\rho}(\cdot)$. Let $y\in\Re^n$ be given. Then  there exists a permutation matrix $P_y\in \Re^{n\times n}$ such that
$\tilde{y}=P_y y$ and $\tilde{y}_1 \geq \tilde{y}_2\geq\cdots\geq \tilde{y}_n$. Thus,
\begin{equation*}
S_{\rho}(y)= P_y^{T} S_{\rho}(\tilde{y})= P_y^{T} \Pi_{\cal D}(\tilde{y}-\rho w)= P_y^T \Pi_{\cal D}(P_yy-\rho w).
\end{equation*}

Next we recall an important result on computing ${\rm Prox}_p(\cdot)$, which comes from \cite[Corollary 4]{yu2013decomposing}.

\begin{proposition}\label{prop_proximal_mapping}
	Let $y\in \Re^n$ be given. Then, we have that
	\begin{equation*}
	{\rm Prox}_{p}(y)= {\rm Prox}_{\beta\|\cdot\|_1} (S_{\rho}(y))= {\rm sign}(S_{\rho}(y))\circ\max(|S_{\rho}(y)|-\beta,0),
	\end{equation*}
	where ``$\circ$" denotes the Hadamard product.
\end{proposition}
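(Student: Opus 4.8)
The second equality is just the standard closed form for the proximal mapping of $\beta\|\cdot\|_1$ (componentwise soft-thresholding), so the substance of the claim is the first equality, namely that ${\rm Prox}_p$ decomposes as the composition ${\rm Prox}_{\beta\|\cdot\|_1}\circ S_\rho$. My plan is to prove this decomposition directly from the optimality (subdifferential) characterizations of the three proximal maps involved, which is exactly the setting of \cite[Corollary 4]{yu2013decomposing}. I would write $f=\beta\|\cdot\|_1$ and $h=\rho\, g$ with $g(x)=\sum_{1\le i<j\le n}|x_i-x_j|$, so that $S_\rho(y)={\rm Prox}_h(y)$ by its definition, and recall that $u={\rm Prox}_{f+h}(y)$ is characterized by $y-u\in\partial f(u)+\partial h(u)$ (the sum rule applies since $f$ and $h$ are finite-valued convex functions).

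First I would set $v:=S_\rho(y)={\rm Prox}_h(y)$ and $w:={\rm Prox}_f(v)$, and record the certificates coming from the two defining inclusions: there exist $s:=y-v\in\partial h(v)$ and $t:=v-w\in\partial f(w)$. To conclude $w={\rm Prox}_p(y)$ it then suffices, by uniqueness of the proximal point, to verify that $w$ satisfies the optimality inclusion for ${\rm Prox}_{f+h}(y)$. Adding the two certificates gives $y-w=(y-v)+(v-w)=s+t$, and since $t\in\partial f(w)$ already holds, the whole argument reduces to the single inclusion $s\in\partial h(w)$.

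The crux, and the step I expect to be the hardest, is therefore the monotonicity statement $\partial h(v)\subseteq\partial h\big({\rm Prox}_f(v)\big)$ for every $v\in\Re^n$. I would prove it using the explicit description $\partial g(x)=\{\sum_{i<j}\theta_{ij}(e_i-e_j):\theta_{ij}\in\partial|\cdot|(x_i-x_j)\}$ (with $e_i$ the $i$-th standard basis vector and $\partial|\cdot|(0)=[-1,1]$), so that $\partial g(x)$ is governed entirely by the \emph{sign pattern} of the pairwise differences $x_i-x_j$ and is enlarged precisely when some of these differences vanish. The key observation is that componentwise soft-thresholding $\eta(z)={\rm sign}(z)\max(|z|-\beta,0)$ is non-decreasing, hence order-preserving, and can only \emph{create} new ties (exactly when two distinct inputs both lie in $[-\beta,\beta]$ and are sent to $0$), never reverse an inequality or split an existing tie. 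Consequently, for each pair $(i,j)$ any admissible coefficient $\theta_{ij}\in\partial|\cdot|(v_i-v_j)$ remains admissible at $w={\rm Prox}_f(v)$: if $v_i\neq v_j$ then $w_i,w_j$ keep the same order, so $\theta_{ij}={\rm sign}(v_i-v_j)$ is still valid, and if the pair happens to merge into a tie the interval $[-1,1]$ only grows to absorb it; while if $v_i=v_j$ then $w_i=w_j$ and the subdifferential interval is unchanged. This yields $\partial g(v)\subseteq\partial g(w)$, hence $s\in\partial h(v)\subseteq\partial h(w)$.

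Combining the two steps, $y-w=s+t\in\partial h(w)+\partial f(w)=\partial(f+h)(w)$, so $w={\rm Prox}_{f+h}(y)={\rm Prox}_p(y)$, which is precisely the identity ${\rm Prox}_p(y)={\rm Prox}_{\beta\|\cdot\|_1}(S_\rho(y))$; the final explicit expression then follows from the soft-thresholding formula. The only delicate point throughout is the order-preservation and tie-coarsening behaviour of soft-thresholding that underpins the subdifferential inclusion, since the structure of $g$ (its permutation invariance and its dependence only on the ordering pattern of the coordinates) is exactly what makes this inclusion hold; everything else is bookkeeping with the optimality conditions.
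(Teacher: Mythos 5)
Your proof is correct, but it takes a different route from the paper: the paper offers no proof at all for this proposition, simply recalling it as a known result from \cite[Corollary 4]{yu2013decomposing}. What you have done, in effect, is reconstruct the argument behind that citation in a self-contained way: your reduction of the composition identity ${\rm Prox}_{f+h}={\rm Prox}_f\circ{\rm Prox}_h$ to the subdifferential inclusion $\partial h(v)\subseteq \partial h({\rm Prox}_f(v))$ is precisely Yu's sufficient condition (his Theorem 1), and your verification of that condition for the pair $f=\beta\|\cdot\|_1$, $h=\rho g$ --- using that $\partial g$ is determined entirely by the order/tie pattern of the coordinates, that the sum rule applies to the finite (indeed polyhedral) convex summands $|x_i-x_j|$, and that componentwise soft-thresholding weakly preserves order and can only coarsen ties, so that $\partial|\cdot|(v_i-v_j)\subseteq\partial|\cdot|(w_i-w_j)$ for every pair --- is sound in every detail, including the delicate case where a strict inequality $v_i>v_j$ collapses to a tie $w_i=w_j$ (the singleton $\{1\}$ is absorbed into $[-1,1]$). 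What your approach buys is a complete, verifiable argument that makes transparent \emph{why} the decomposition holds for this particular regularizer; what the paper's approach buys is brevity, at the cost of leaving the structural reason for the decomposition implicit in an external reference.
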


The above proposition states that the proximal mapping of the clustered lasso regularizer can be decomposed into the composition of the proximal mapping of $\beta\|\cdot\|_1$ and the proximal mapping of $\rho g(\cdot)$.

\subsection{The computation of the generalized Jacobian of ${\rm Prox}_p(\cdot)$}
We first present some results on the generalized HS-Jacobian of $\Pi_{\cal D}$, which can be obtained directly from the previous work in \cite{han1997newton}, wherein
Han and Sun  constructed theoretically computable generalized Jacobian of the metric projector over a polyhedral set. Recently, Li et al. \cite{li2018efficiently} further derived an efficient formula for computing a special HS-Jacobian of the solution mapping of a parametric strongly convex quadratic programming.
In this section, we will adapt the ideas in \cite{li2018efficiently} to efficiently compute the generalized Jacobian of $\Pi_{\cal D}(\cdot)$.

Since $\Pi_{\cal D}$ is the metric projection onto the nonempty polyhedral set ${\cal D}$, for any given $y\in \Re^n$, there exists a multiplier $\lambda\in \Re^{n-1}$ such that the following KKT system holds:
\begin{equation}\label{Pi_D_kkt}
\left\{
\begin{array}{l}
\Pi_{\cal D}(y)-y+B^T\lambda=0,\\[5pt]
B\Pi_{\cal D}(y)\geq 0, \ \lambda\leq 0,\\[5pt]
\lambda^T B\Pi_{\cal D}(y)=0.
\end{array}\right.
\end{equation}
Let ${\cal M}_{\cal D}(y):=\{\lambda \in \Re^{n-1}\mid (y,\lambda)\ {\rm satisfies} \ \eqref{Pi_D_kkt}\}$. Since ${\cal M}_{\cal D}(y)$ is a nonempty polyhedral convex set which contains no lines, it has at least one extreme point \cite[Corollary 18.5.3]{rockafellar2015convex}.
Denote the active index set by
\begin{equation}\label{activeset}
{\cal I}_{\cal D}(y):=\{i\mid B_i \Pi_{\cal D}(y)=0,i=1,2,\cdots,n-1\},
\end{equation}
where $B_i$ is the $i$-th row of $B$. Define a collection of index subsets of $\{1,\cdots,n-1\}$ as follows
\begin{equation*}
{\cal K}_{\cal D}(y):=\Big\{K\mid \exists \lambda  \in {\cal M}_{\cal D}(y)  \ {\rm s.t.} \ {\rm supp}(\lambda)\subseteq K \subseteq {\cal I}_{\cal D}(y),B_K \mbox{ is of full row rank}\Big\},
\end{equation*}
where ${\rm supp}(\lambda)$ denotes the support of $\lambda$ and $B_K$ is the matrix consisting of the rows of $B$ indexed by $K$. It should be noted that ${\cal K}_{\cal D}(y)$ is nonempty due to the existence of an extreme point of ${\cal M}_{\cal D}(y)$ as stated in \cite{han1997newton}. Han and Sun in \cite{han1997newton} introduced the following multifunction ${\cal Q}_{\cal D}:\Re^n\rightrightarrows\Re^{n\times n}$ defined by
\begin{equation*}
{\cal Q}_{\cal D}(y):=\Big\{\widehat{Q}\in \Re^{n\times n}\mid \widehat{Q}=I_n-B_K^T(B_K B_K^T)^{-1}B_K,K\in {\cal K}_{\cal D}(y)\Big\},
\end{equation*}
which is called the generalized HS-Jacobian of $\Pi_{\cal D}$ at $y$. From \cite[Proposition 1 \& Theorem 1]{li2017efficient}, we can readily get the following proposition, whose proof is omitted for brevity.
\begin{proposition}\label{pro_QD}
	For any $y\in \Re^n$, there exists a neighborhood ${\cal Y}$ of $y$ such that
	\[
	{\cal K}_{\cal D}(u)\subseteq{\cal K}_{\cal D}(y),\ {\cal Q}_{\cal D}(u)\subseteq {\cal Q}_{\cal D}(y), \ \forall u\in {\cal Y},
	\]
	and
	\[
	\Pi_{\cal D}(u)=\Pi_{\cal D}(y)+ \widehat{Q}(u-y),\ \forall \widehat{Q}\in {\cal Q}_{\cal D}(u).
	\]
	Thus, $\partial_{B}\Pi_{\cal D}(y)\subseteq  {\cal Q}_{\cal D}(y)$, where $\partial_{B}\Pi_{\cal D}(y)$ is the B-subdifferential of $\Pi_{\cal D}$ at $y$. In particular,  $\widehat{Q}_{{\cal D},0}(y) \in {\cal Q}_{\cal D}(y)$, where
	\[
	\widehat{Q}_{{\cal D},0}(y):=I_n-B_{{\cal I}_{\cal D}(y)}^T \left(B_{{\cal I}_{\cal D}(y)} B_{{\cal I}_{\cal D}(y)}^T\right)^{\dagger}B_{{\cal I}_{\cal D}(y)}.
	\]
\end{proposition}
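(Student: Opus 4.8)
The plan is to specialize the general theory of the generalized HS-Jacobian of the metric projector onto a polyhedron (developed in \cite{han1997newton,li2017efficient,li2018efficiently}) to the particular set ${\cal D}=\{x\in\Re^n\mid Bx\geq 0\}$, exploiting the very special structure of $B$. Observe first that the rows of $B$ are $B_i=e_i^T-e_{i+1}^T$, $i=1,\dots,n-1$, and any collection of them is linearly independent; hence $B$ has full row rank and $B_K$ is of full row rank for \emph{every} $K\subseteq\{1,\dots,n-1\}$. Consequently $B^T$ is injective, so the stationarity equation in \eqref{Pi_D_kkt} pins down the multiplier uniquely: ${\cal M}_{\cal D}(y)=\{\lambda^*(y)\}$ with $\lambda^*(y)=(BB^T)^{-1}B\,(y-\Pi_{\cal D}(y))$, which depends continuously on $y$ since $\Pi_{\cal D}$ is (nonexpansive, hence continuous). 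These two facts---full row rank of every $B_K$ and a unique, continuously varying multiplier---make the whole statement transparent and sharpen the ``extreme point'' remark preceding the proposition. The heart of the proof is the inclusion ${\cal K}_{\cal D}(u)\subseteq{\cal K}_{\cal D}(y)$ for $u$ near $y$; everything else follows from it by short computations.

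To establish that inclusion I would combine two semicontinuity facts. First, since $\Pi_{\cal D}$ is continuous, any constraint that is inactive at $y$ (i.e. $B_i\Pi_{\cal D}(y)>0$) stays inactive for $u$ in a neighborhood of $y$; the contrapositive gives the upper semicontinuity of the active set, ${\cal I}_{\cal D}(u)\subseteq{\cal I}_{\cal D}(y)$. Second, by continuity of $\lambda^*(\cdot)$, any index with $\lambda^*(y)_i\neq 0$ also has $\lambda^*(u)_i\neq 0$ for $u$ close to $y$, giving the lower semicontinuity of the support, ${\rm supp}(\lambda^*(y))\subseteq{\rm supp}(\lambda^*(u))$. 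Now if $K\in{\cal K}_{\cal D}(u)$ then ${\rm supp}(\lambda^*(u))\subseteq K\subseteq{\cal I}_{\cal D}(u)$, and combining with the two inclusions yields ${\rm supp}(\lambda^*(y))\subseteq K\subseteq{\cal I}_{\cal D}(y)$; since $B_K$ is automatically of full row rank, this is exactly $K\in{\cal K}_{\cal D}(y)$. Because each $\widehat{Q}\in{\cal Q}_{\cal D}$ is determined solely by its index set $K$, the inclusion ${\cal Q}_{\cal D}(u)\subseteq{\cal Q}_{\cal D}(y)$ is then immediate.

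For the affine expansion I would note that whenever $K\in{\cal K}_{\cal D}(y)$ the KKT system solves explicitly on that piece: from $\Pi_{\cal D}(y)=y-B_K^T\lambda^*(y)_K$ (stationarity, using ${\rm supp}(\lambda^*(y))\subseteq K$) together with $B_K\Pi_{\cal D}(y)=0$ (activity) one obtains $\lambda^*(y)_K=(B_KB_K^T)^{-1}B_Ky$ and hence $\Pi_{\cal D}(y)=\widehat{Q}_K y$, where $\widehat{Q}_K=I_n-B_K^T(B_KB_K^T)^{-1}B_K$. Given $\widehat{Q}=\widehat{Q}_K\in{\cal Q}_{\cal D}(u)$, we have $K\in{\cal K}_{\cal D}(u)\subseteq{\cal K}_{\cal D}(y)$, so the identity $\Pi_{\cal D}(\cdot)=\widehat{Q}_K(\cdot)$ holds at both $u$ and $y$; subtracting gives $\Pi_{\cal D}(u)-\Pi_{\cal D}(y)=\widehat{Q}_K(u-y)$, the claimed expansion. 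The B-subdifferential inclusion then follows by a standard limiting argument: at any point $u$ where $\Pi_{\cal D}$ is differentiable the local expansion forces $\Pi_{\cal D}'(u)=\widehat{Q}_K\in{\cal Q}_{\cal D}(u)\subseteq{\cal Q}_{\cal D}(y)$, and since ${\cal Q}_{\cal D}(y)$ is a finite (hence closed) set, the limit along any sequence $u_k\to y$ remains in ${\cal Q}_{\cal D}(y)$.

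Finally, for the distinguished element I would take $K={\cal I}_{\cal D}(y)$: any KKT multiplier has support in ${\cal I}_{\cal D}(y)$ by complementarity and $B_{{\cal I}_{\cal D}(y)}$ is of full row rank, so ${\cal I}_{\cal D}(y)\in{\cal K}_{\cal D}(y)$; moreover full row rank makes $(B_{{\cal I}_{\cal D}(y)}B_{{\cal I}_{\cal D}(y)}^T)^{\dagger}=(B_{{\cal I}_{\cal D}(y)}B_{{\cal I}_{\cal D}(y)}^T)^{-1}$, whence the corresponding $\widehat{Q}_K$ coincides with $\widehat{Q}_{{\cal D},0}(y)$ and $\widehat{Q}_{{\cal D},0}(y)\in{\cal Q}_{\cal D}(y)$. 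The step I expect to be the true obstacle is the inclusion ${\cal K}_{\cal D}(u)\subseteq{\cal K}_{\cal D}(y)$, i.e. the persistence of a valid multiplier support as the parameter moves; in the general polyhedral setting of \cite{li2017efficient} this requires a careful extreme-point and linear-independence analysis, but here the full row rank of $B$ collapses ${\cal M}_{\cal D}(y)$ to a single, continuously varying multiplier and renders the argument elementary.
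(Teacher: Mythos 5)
Your argument is correct in substance, but it takes a genuinely different route from the paper: the paper never proves Proposition \ref{pro_QD} directly, instead quoting the general HS-Jacobian theory for projections onto polyhedra (Proposition 1 and Theorem 1 of \cite{li2017efficient}, built on \cite{han1997newton}) and omitting the proof, whereas you give a self-contained argument tailored to the set ${\cal D}$. Your key structural observation --- that every row submatrix $B_K$ of the difference matrix $B$ has full row rank, so that ${\cal M}_{\cal D}(y)$ collapses to the single, continuously varying multiplier $\lambda^*(y)=(BB^T)^{-1}B\left(y-\Pi_{\cal D}(y)\right)$ --- is what lets you replace the extreme-point and multiplier-selection analysis of the general polyhedral setting by two elementary semicontinuity facts (inactive constraints stay inactive; nonzero multiplier entries stay nonzero), and it also makes the final assertion immediate, since $B_{{\cal I}_{\cal D}(y)}B_{{\cal I}_{\cal D}(y)}^T$ is invertible and the pseudo-inverse is a true inverse. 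What the paper's citation buys is generality (the same machinery covers arbitrary polyhedra, e.g.\ the Birkhoff polytope in \cite{li2017efficient}); what yours buys is transparency and independence from external results, at the cost of relying on a feature (linear independence of all rows of $B$) that has no analogue in the general setting.

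One step should be tightened. In the B-subdifferential inclusion you assert that at a differentiable point $u$ near $y$ the local expansion ``forces'' $\Pi_{\cal D}'(u)\in{\cal Q}_{\cal D}(u)$. The expansion applied at $u$ only yields, for each fixed direction $h$, some $Q_h\in{\cal Q}_{\cal D}(u)$ with $\Pi_{\cal D}'(u)h=Q_h h$: take $t_k\downarrow 0$, pick $\widehat{Q}_k\in{\cal Q}_{\cal D}(u+t_k h)\subseteq{\cal Q}_{\cal D}(u)$, and pigeonhole over the finite set ${\cal Q}_{\cal D}(u)$. Since $Q_h$ may a priori depend on $h$, this does not yet identify $\Pi_{\cal D}'(u)$ with a single element. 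The fix is standard: the sets $\ker\left(\Pi_{\cal D}'(u)-Q\right)$, $Q\in{\cal Q}_{\cal D}(u)$, are linear subspaces whose union is all of $\Re^n$, and a finite union of proper subspaces cannot cover $\Re^n$, so $\Pi_{\cal D}'(u)=Q$ for some $Q\in{\cal Q}_{\cal D}(u)\subseteq{\cal Q}_{\cal D}(y)$. With that inserted, your closing step (finiteness, hence closedness, of ${\cal Q}_{\cal D}(y)$ under limits $u_k\to y$) completes the proof.
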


Next, we propose a simple and useful result for our further discussions. Given $y\in \Re^n$ and $K\subseteq \{1,\cdots,n-1\}$, we provide an alternative way to compute $ I_n-B_K^T(B_K B_K^T)^{\dagger}B_K$. Let $\Sigma_{K}={\rm Diag}(\sigma_{K})\in \Re^{(n-1)\times(n-1)}$ be defined by
\begin{equation*}
(\sigma_{K})_i=\left\{
\begin{array}{ll}
1, & \mbox{if $i\in K$}\\[5pt]
0, & \mbox{otherwise}
\end{array}\right.\quad \mbox{for $i=1,2,\cdots,n-1$.}
\end{equation*}
By using the fact that there exists a permutation matrix $P_K$ such that
\begin{align*}
\left[ \begin{array}{c}
B_{K}\\[2mm]
\textbf{0}
\end{array}\right]_{(n-1)\times n}=P_K\Sigma_K B=\left[ \begin{array}{ll}
I_{|K|} & \textbf{0}\\[5pt]
\textbf{0}  & \textbf{0}
\end{array}\right]_{(n-1)\times (n-1)} P_K B,
\end{align*}
one can easily prove the following proposition, which will be used later.
\begin{proposition}\label{lemma_inv}
	It holds that
	\[I_n-B_{K}^T(B_{K} B_{K}^T)^{\dagger}B_{K}=I_n-B^T(\Sigma_K B B^T \Sigma_K)^{\dagger}B.\]
\end{proposition}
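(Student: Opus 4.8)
The plan is to establish the equivalent identity
\[
B^T(\Sigma_K BB^T\Sigma_K)^{\dagger}B = B_K^T(B_KB_K^T)^{\dagger}B_K,
\]
after which subtracting both sides from $I_n$ gives the proposition. My strategy is to use the permutation identity displayed just before the statement, namely $P_K\Sigma_K B = \begin{bmatrix} B_K\\ \mathbf 0\end{bmatrix}$, to bring $\Sigma_K BB^T\Sigma_K$ into block-diagonal form and then compute its Moore--Penrose inverse explicitly. (Conceptually, both sides are the orthogonal projection onto the row space of $B_K$, but I will carry out the computation directly since it dovetails with the given factorization.)

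First I would write $\Sigma_K B = P_K^T\begin{bmatrix} B_K\\ \mathbf 0\end{bmatrix}$, using $P_K^{-1}=P_K^T$, so that
\[
\Sigma_K BB^T\Sigma_K = P_K^T\begin{bmatrix} B_KB_K^T & \mathbf 0\\ \mathbf 0 & \mathbf 0\end{bmatrix}P_K.
\]
Then I would invoke two elementary facts about the pseudoinverse: first, for an orthogonal matrix $P$ one has $(P^TMP)^{\dagger}=P^TM^{\dagger}P$; second, the pseudoinverse of ${\rm Diag}(C,\mathbf 0)$ is ${\rm Diag}(C^{\dagger},\mathbf 0)$, which one verifies directly from the four Penrose conditions. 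Combining these yields the explicit formula
\[
(\Sigma_K BB^T\Sigma_K)^{\dagger} = P_K^T\begin{bmatrix} (B_KB_K^T)^{\dagger} & \mathbf 0\\ \mathbf 0 & \mathbf 0\end{bmatrix}P_K.
\]

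The one subtlety is that the target expression sandwiches this pseudoinverse between $B^T$ and $B$, not between $(\Sigma_K B)^T$ and $\Sigma_K B$. I would resolve this by noting that $\Sigma_K BB^T\Sigma_K$ has zero rows and columns outside the index set $K$; since its range is therefore contained in ${\rm span}\{e_i : i\in K\}$ and it is symmetric, its pseudoinverse shares the same support, giving $\Sigma_K(\Sigma_K BB^T\Sigma_K)^{\dagger}\Sigma_K = (\Sigma_K BB^T\Sigma_K)^{\dagger}$. Inserting these two idempotent factors $\Sigma_K$ lets me replace $B$ by $\Sigma_K B$ on both sides, whereupon the permutation matrices cancel through $P_KP_K^T = I_n$ and the block structure collapses to exactly $B_K^T(B_KB_K^T)^{\dagger}B_K$.

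I expect the main obstacle to be the bookkeeping around the Moore--Penrose inverse rather than any deep idea: one must justify the insertion of the $\Sigma_K$ factors via the row/column support of the pseudoinverse rather than by ad hoc manipulation, and track the orthogonal conjugation by $P_K$ carefully. It is worth remarking that the rows of $B$, being of the form $e_i-e_{i+1}$, are always linearly independent, so $B_KB_K^T$ is in fact invertible and each $(\,\cdot\,)^{\dagger}$ above reduces to an ordinary inverse; nonetheless the argument is cleanest to state uniformly in terms of pseudoinverses. Once these points are in place the identity follows immediately.
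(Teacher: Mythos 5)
Your proof is correct and takes essentially the same route as the paper: the paper's argument likewise rests on the factorization $P_K\Sigma_K B=\left[\begin{smallmatrix} B_K\\ \mathbf{0}\end{smallmatrix}\right]$ displayed just before the proposition, with the remaining pseudoinverse manipulations left to the reader. Your justification for inserting the $\Sigma_K$ factors via the row/column support of $(\Sigma_K BB^T\Sigma_K)^{\dagger}$ is a valid and careful filling-in of exactly those omitted details.
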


For convenience, we state Lemma \ref{lemma_T} and Proposition \ref{prop_gamma} below that are discussed in \cite[Lemma 2 \& Proposition 6]{li2018efficiently}.
For $2\leq j \leq n$, we define the linear mapping $ {\bf B}_j:\Re^j \rightarrow \Re^{j-1}$ such that ${\bf B}_j x=[x_1-x_2;\cdots;x_{j-1}-x_j]$, $ \forall x\in \Re^j$. With this notation, we can write $B={\bf B}_n$.

\begin{lemma}\label{lemma_T}
	For $2\leq j \leq n$, it holds that
	\[T_j:=I_j-{\bf B}_j^T({\bf B}_j {\bf B}_j^T)^{-1}{\bf B}_j=\frac{1}{j}{\bf E}_j.\]
\end{lemma}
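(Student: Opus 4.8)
The plan is to recognize $T_j$ as an orthogonal projector and then identify the subspace onto which it projects. First I would note that the first-difference operator ${\bf B}_j \in \Re^{(j-1)\times j}$ has full row rank $j-1$, since its rows $[1,-1,0,\cdots]$, $[0,1,-1,0,\cdots]$, and so on are manifestly linearly independent. Hence ${\bf B}_j {\bf B}_j^T$ is invertible and the matrix $P_j := {\bf B}_j^T ({\bf B}_j {\bf B}_j^T)^{-1} {\bf B}_j$ is well defined. A direct check gives $P_j^2 = P_j$ and $P_j^T = P_j$, so $P_j$ is the orthogonal projector onto the range of ${\bf B}_j^T$, equivalently the row space of ${\bf B}_j$.

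Consequently $T_j = I_j - P_j$ is the orthogonal projector onto the orthogonal complement of the row space of ${\bf B}_j$, which is exactly the null space $\ker {\bf B}_j$. The next step is to compute this null space explicitly: ${\bf B}_j x = 0$ forces $x_1 - x_2 = \cdots = x_{j-1} - x_j = 0$, i.e. $x_1 = x_2 = \cdots = x_j$, so $\ker {\bf B}_j = {\rm span}\{{\bf 1}_j\}$, where ${\bf 1}_j \in \Re^j$ denotes the all-ones vector. Finally I would write down the orthogonal projector onto this one-dimensional subspace: normalizing ${\bf 1}_j$ to the unit vector ${\bf 1}_j/\sqrt{j}$, the projector is $\frac{1}{j}{\bf 1}_j {\bf 1}_j^T = \frac{1}{j}{\bf E}_j$, which yields $T_j = \frac{1}{j}{\bf E}_j$ as claimed.

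I do not anticipate a genuine obstacle, as the identity is elementary once $T_j$ is recognized as $I_j$ minus an orthogonal projector. The only computation worth avoiding is the direct inversion of ${\bf B}_j {\bf B}_j^T$: this is the $(j-1)\times(j-1)$ tridiagonal matrix with $2$ on the diagonal and $-1$ on the adjacent off-diagonals, whose inverse, though available in closed form, is cumbersome; the projection viewpoint bypasses it entirely. If a self-contained verification is preferred, one can instead check directly that $\frac{1}{j}{\bf E}_j$ is symmetric and idempotent (using ${\bf E}_j^2 = j{\bf E}_j$) with range ${\rm span}\{{\bf 1}_j\} = \ker {\bf B}_j$; since a symmetric idempotent matrix is precisely the orthogonal projector onto its own range, this matrix must coincide with $T_j$.
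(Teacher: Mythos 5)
Your proof is correct, but there is nothing in the paper to compare it against: the paper does not prove Lemma \ref{lemma_T} at all, stating it as a result quoted from \cite[Lemma 2]{li2018efficiently}. Your argument therefore serves as a self-contained substitute for the citation, and it is a clean one. Each step checks out: ${\bf B}_j$ has full row rank $j-1$, so $P_j := {\bf B}_j^T({\bf B}_j{\bf B}_j^T)^{-1}{\bf B}_j$ is well defined, and since $P_j$ is symmetric and idempotent it is the orthogonal projector onto ${\rm range}({\bf B}_j^T)$; hence $T_j = I_j - P_j$ is the orthogonal projector onto $\ker{\bf B}_j$, which is exactly the span of the all-ones vector, and the projector onto that line is $\frac{1}{j}{\bf E}_j$. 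The main virtue of this route, as you note, is that it bypasses the inverse of the $(j-1)\times(j-1)$ tridiagonal matrix ${\bf B}_j{\bf B}_j^T$ (with diagonal $2$ and off-diagonal $-1$), which a brute-force verification would have to manipulate explicitly. Your closing alternative is also sound: since orthogonal projectors onto the same subspace coincide, it suffices to observe that $\frac{1}{j}{\bf E}_j$ is symmetric, idempotent (via ${\bf E}_j^2 = j{\bf E}_j$), and has range equal to $\ker{\bf B}_j$. Either version would be a perfectly acceptable proof to include in place of the citation.
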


\begin{proposition}\label{prop_gamma}
	Let $\Sigma\in \Re^{(n-1)\times(n-1)}$ be an N-block diagonal matrix with $\Sigma={\rm Diag}( \Lambda_1,\cdots,\Lambda_N)$, where for $i=1,\cdots,N$, $\Lambda_i$ is either ${\bf O}_{n_i}$ or $I_{n_i}$, and any two consecutive blocks are not of the same type. Denote $J:=\{j\mid \Lambda_j=I_{n_j},j=1,\cdots,N\}.$ Then, it holds that
	\[
	\Gamma:=I_n-B^T(\Sigma B B^T \Sigma)^{\dagger}B={\rm Diag}(\Gamma_1,\cdots,\Gamma_N),
	\]
	where for $i=1,\cdots,N$,
	\begin{equation*}
	\Gamma_i=\left\{
	\begin{array}{ll}
	\frac{1}{n_i+1}{\bf E}_{n_i+1}, & \mbox{if $i\in J$},\\[5pt]
	I_{n_i}, & \mbox{if $i \notin J $ and $i\in \{1,N\}$},\\[5pt]
	I_{n_i-1}, & \mbox{otherwise,}
	\end{array}\right.
	\end{equation*}
	with the convention $I_0=\emptyset$. Moreover, $\Gamma=H+UU^T=H+U_J U_J^T$, where $H\in \Re^{n\times n}$ is an N-block diagonal matrix given by $H={\rm Diag}(\Upsilon_1,\cdots,\Upsilon_N)$ with
	\begin{align*}
	\Upsilon_i\;=\;\left\{\begin{array}{ll}
	{\bf O}_{n_i+1}, & \mbox{if $i\in J$},\\[2mm]
	I_{n_i}, & \mbox{if $i \notin J $ and $i\in \{1,N\}$},\\[2mm]
	I_{n_i-1}, & \mbox{otherwise.}
	\end{array}\right.
	\end{align*}
	Here the $(k,j)$-th entry of the matrix $U\in\Re^{n\times N}$ is given by
	\begin{equation*}
	U_{k,j}=\left\{
	\begin{array}{ll}
	\frac{1}{\sqrt{n_j+1}}, & \mbox{if $\sum_{t=1}^{j-1}n_t+1\leq k\leq \sum_{t=1}^j n_t+1$, and $j\in J$},\\[5pt]
	0, & \mbox{otherwise,}
	\end{array}\right.
	\end{equation*}
	and $U_J$ consists of the nonzero columns of $U$, i.e., the columns whose indices are in $J$.
\end{proposition}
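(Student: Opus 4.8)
The plan is to recognize $\Gamma$ as an orthogonal projection matrix and then to exploit the sparsity pattern of the difference matrix $B$ to split that projection into a direct sum. Let $K$ denote the active index set, i.e. the positions of the unit diagonal entries of $\Sigma$, so that $\Sigma$ plays the role of $\Sigma_K$ in Proposition \ref{lemma_inv}. That proposition then gives $\Gamma = I_n - B_K^T(B_K B_K^T)^{\dagger}B_K$. Since $B_K^T(B_K B_K^T)^{\dagger}B_K$ is exactly the orthogonal projector onto the row space of $B_K$, the matrix $\Gamma$ is the orthogonal projector onto $\ker B_K=\{x\in\Re^n : x_k=x_{k+1}\ \text{for all}\ k\in K\}$. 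I would carry this identification through the whole argument.

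Next I would analyze the combinatorial structure of $K$. Because $\Sigma$ is $N$-block diagonal with alternating zero/identity blocks, $K$ is the union of the consecutive index runs coming from the blocks in $J$, and any two such runs are separated by at least one inactive index (the intervening zero block). Consequently the equality constraints $x_k=x_{k+1}$, $k\in K$, partition $\{1,\dots,n\}$ into maximal chains of coordinates forced to be equal: each block $i\in J$ of size $n_i$ glues the $n_i+1$ consecutive coordinates $r_i,\dots,r_i+n_i$ (with $r_i=\sum_{t<i}n_t+1$) into one chain, while the remaining coordinates stay free as singleton chains. Since distinct chains occupy disjoint coordinate sets, the associated subspaces are mutually orthogonal, $\ker B_K$ is their orthogonal direct sum, and hence $\Gamma$ is block diagonal with each diagonal block being the projector onto one chain. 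On a chain of $s$ equal coordinates this projector is $\frac{1}{s}{\bf E}_s$ — precisely $T_s$ from Lemma \ref{lemma_T} applied to ${\bf B}_s$ — and on a free coordinate it is the scalar $1$.

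The main work, and the step I expect to be most delicate, is the index bookkeeping that converts these chains into the stated per-block formula for $\Gamma_i$ and verifies that the block sizes tile $\{1,\dots,n\}$ consistently with the $\Sigma$-block structure. The subtlety is entirely at the block boundaries: a boundary coordinate shared between an active block and an adjacent inactive block is absorbed into the active block's chain. Thus an interior inactive block donates one coordinate to each of its two active neighbours and retains $n_i-1$ free coordinates; an inactive block at an end ($i\in\{1,N\}$) has only one active neighbour and retains $n_i$; and an active block $i\in J$ collects its $n_i+1$ coordinates (with the convention $I_0=\emptyset$ absorbing degenerate cases). I would confirm that these sizes tile correctly by showing, using the alternation of the blocks, that the cumulative size of $\Gamma_1,\dots,\Gamma_{j-1}$ equals $\sum_{t<j}n_t$ whenever $j\in J$, so that $\Gamma_j$ lands exactly on coordinates $r_j,\dots,r_j+n_j$; here the $+1$ contributions of the preceding active blocks cancel the $-1$ contributions of the preceding interior inactive blocks. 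This pins down $\mathrm{Diag}(\Gamma_1,\dots,\Gamma_N)$ and, as a byproduct, the support of the $j$-th column of $U$.

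Finally, the decomposition $\Gamma=H+UU^T$ follows by a direct rank-one splitting of each averaging block. For $i\in J$ I would write $\frac{1}{n_i+1}{\bf E}_{n_i+1}={\bf O}_{n_i+1}+u_iu_i^T$ with $u_i=\frac{1}{\sqrt{n_i+1}}{\bf 1}_{n_i+1}$, which is exactly the $i$-th column of $U$ restricted to its support, so that $\Upsilon_i=\mathbf{O}_{n_i+1}$ absorbs the remaining part; for $i\notin J$ the block $\Gamma_i$ is already an identity $\Upsilon_i$ and the corresponding column of $U$ vanishes. Since the nonzero columns of $U$ have pairwise disjoint supports, $UU^T$ is block diagonal with blocks $u_iu_i^T$ for $i\in J$ and zero elsewhere, whence $H+UU^T=\Gamma$; dropping the zero columns then yields $UU^T=U_JU_J^T$, which completes the argument.
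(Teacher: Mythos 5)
Your argument is correct, and it is worth noting that the paper itself does not prove Proposition \ref{prop_gamma} at all: it is recalled verbatim from \cite[Lemma 2 \& Proposition 6]{li2018efficiently}, so what you have produced is a self-contained proof of a statement the paper only cites. Your route also differs in flavor from the one in that reference, which proceeds by direct block-matrix algebra: there, one observes that because the active runs of $K$ are separated by at least one inactive index, $B_K B_K^T$ is block diagonal with blocks of the form ${\bf B}_{n_i+1}{\bf B}_{n_i+1}^T$, so that $B_K^T(B_KB_K^T)^{-1}B_K$ splits blockwise and Lemma \ref{lemma_T} is applied block by block. You instead identify $\Gamma=I_n-B_K^T(B_KB_K^T)^{\dagger}B_K$ (via Proposition \ref{lemma_inv}) as the orthogonal projector onto $\ker B_K$, decompose $\{1,\dots,n\}$ into the chains of coordinates forced equal, and use orthogonality of subspaces supported on disjoint coordinate sets to get the block-diagonal form, with Lemma \ref{lemma_T} entering only to recognize each chain projector as $\frac{1}{s}{\bf E}_s$. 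The geometric viewpoint buys transparency: disjointness of the chains, the absorption of boundary coordinates into active chains, and the provenance of the rank-one factors $u_ju_j^T$ in $\Gamma=H+U_JU_J^T$ all become immediate, and your cancellation argument (active $+1$ against interior-inactive $-1$) correctly settles the one genuinely delicate point, namely that the stated block sizes tile $\{1,\dots,n\}$ in the right order. The matrix-algebraic route is more mechanical but generalizes more readily to settings where the projector interpretation is unavailable (e.g., non-polyhedral weightings); for the statement at hand, both are complete, and yours is arguably the cleaner proof.
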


Based on the above preliminaries, we define the multifunction ${\cal Q}_{S_{\rho}}:\Re^n\rightrightarrows\Re^{n\times n}$ by
\[
{\cal Q}_{S_{\rho}}(y):=\Big\{Q\in \Re^{n\times n}\mid Q=P_y^T\widehat{Q}P_y, \ \widehat{Q}\in {\cal Q}_{\cal D}(P_y y-\rho w)\Big\}.
\]
The following proposition shows that ${\cal Q}_{S_{\rho}}(y)$ can be viewed as the generalized Jacobian of $S_{\rho}(\cdot)$ at $y$.
\begin{proposition}\label{pro_s}
	For any $y\in \Re^n$, there exists a neighborhood ${\cal Y}$ of $y$ such that for all $u\in {\cal Y}$,
	\[
	{\cal K}_{\cal D}(P_y u-\rho w)\subseteq {\cal K}_{\cal D}(P_y y-\rho w),\  {\cal Q}_{\cal D}(P_y u -\rho w )\subseteq {\cal Q}_{\cal D}(P_y y -\rho w), \  {\cal Q}_{S_{\rho}}(u) \subseteq {\cal Q}_{S_{\rho}}(y)
	\]
	and
	\begin{equation*}
	\left\{
	\begin{aligned}
	&\Pi_{\cal D}(P_y u-\rho w)=\Pi_{\cal D}(P_y y -\rho w)+ \widehat{Q} P_y(u-y),\ \forall \widehat{Q}\in {\cal Q}_{\cal D}(P_y u-\rho w),\\
	& S_{\rho}(u)\;=\;S_{\rho}(y)+Q(u-y),\ \forall Q\in {\cal Q}_{S_{\rho}}(u).
	\end{aligned}
	\right.
	\end{equation*}
\end{proposition}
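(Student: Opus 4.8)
The whole argument hinges on Proposition \ref{pro_QD} applied at the single base point $z:=P_yy-\rho w$, together with a careful comparison of the data-dependent sorting permutation $P_u$ with the fixed permutation $P_y$. First I would invoke Proposition \ref{pro_QD} at $z$ to obtain a neighborhood ${\cal Z}$ of $z$ on which the stated containments and the affine expansion of $\Pi_{\cal D}$ hold. Since the map $u\mapsto P_yu-\rho w$ is affine and sends $y$ to $z$, there is a neighborhood of $y$ on which $P_yu-\rho w\in{\cal Z}$; plugging $v=P_yu-\rho w$ into Proposition \ref{pro_QD} yields at once the first two containments ${\cal K}_{\cal D}(P_yu-\rho w)\subseteq{\cal K}_{\cal D}(P_yy-\rho w)$ and ${\cal Q}_{\cal D}(P_yu-\rho w)\subseteq{\cal Q}_{\cal D}(P_yy-\rho w)$, as well as the first displayed identity (the affine expansion of $\Pi_{\cal D}$ at $P_yu-\rho w$). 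These three claims therefore require no extra work.

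The real content is the last containment ${\cal Q}_{S_\rho}(u)\subseteq{\cal Q}_{S_\rho}(y)$ and the expansion of $S_\rho$, both of which are phrased through the \emph{data-dependent} permutation $P_u$ rather than $P_y$. I would control this as follows. Write $\tilde y=P_yy$, which is non-increasing, and group its indices into the maximal blocks ${\cal B}_1,\dots,{\cal B}_L$ on which $\tilde y$ is constant. Because the gaps between consecutive blocks are strictly positive and permutations preserve the Euclidean norm, shrinking the neighborhood guarantees that every sorting permutation $P_u$ of a nearby $u$ differs from $P_y$ only by a permutation $R$ acting within the blocks ${\cal B}_\ell$; that is, $P_u=RP_y$ with $R={\rm Diag}(R_1,\dots,R_L)$. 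In particular $R\tilde y=\tilde y$, so $P_uy=P_yy$ and hence $P_uu-\rho w=z+P_u(u-y)$ also lies in ${\cal Z}$ once $\|u-y\|$ is small.

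The decisive observation is what happens inside a block. On each ${\cal B}_\ell$ the vector $\tilde y$ is constant while $w$ is \emph{strictly decreasing} (as $w_k=n-2k+1$), so the input $z=\tilde y-\rho w$ is strictly increasing on ${\cal B}_\ell$. Consequently the metric projection onto the monotone cone ${\cal D}$ pools the entire block to a single value, so $\Pi_{\cal D}(z)$ is constant on each ${\cal B}_\ell$; moreover the multiplier in \eqref{Pi_D_kkt} is strictly negative on every within-block constraint, so every index set $K\in{\cal K}_{\cal D}(z)$ must contain all within-block indices. From the first fact, $R\,\Pi_{\cal D}(z)=\Pi_{\cal D}(z)$, whence $P_u^T\Pi_{\cal D}(z)=P_y^T\Pi_{\cal D}(z)=S_\rho(y)$. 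Applying the affine expansion of Proposition \ref{pro_QD} with $v=P_uu-\rho w\in{\cal Z}$ then gives, for $\widehat{Q}\in{\cal Q}_{\cal D}(P_uu-\rho w)\subseteq{\cal Q}_{\cal D}(z)$,
\[
S_\rho(u)=P_u^T\Pi_{\cal D}(P_uu-\rho w)=P_u^T\Pi_{\cal D}(z)+P_u^T\widehat{Q}P_u(u-y)=S_\rho(y)+Q(u-y),
\]
with $Q=P_u^T\widehat{Q}P_u\in{\cal Q}_{S_\rho}(u)$, which is the second displayed identity.

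For the containment ${\cal Q}_{S_\rho}(u)\subseteq{\cal Q}_{S_\rho}(y)$, I would use the second within-block fact together with the explicit block form of $\widehat{Q}$ supplied by Propositions \ref{lemma_inv} and \ref{prop_gamma}: since every admissible $K$ contains all within-block constraints, each block ${\cal B}_\ell$ lies entirely inside one averaging group $G$ of $\widehat{Q}$, on which $\widehat{Q}$ acts as $\tfrac{1}{|G|}{\bf E}$. As $R$ only permutes indices within the blocks ${\cal B}_\ell$, it permutes indices within these averaging groups, and such uniform-averaging (and identity) blocks are invariant under within-group permutation; hence $R^T\widehat{Q}R=\widehat{Q}$. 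Therefore
\[
Q=P_u^T\widehat{Q}P_u=P_y^T(R^T\widehat{Q}R)P_y=P_y^T\widehat{Q}P_y,\qquad \widehat{Q}\in{\cal Q}_{\cal D}(P_yy-\rho w),
\]
so $Q\in{\cal Q}_{S_\rho}(y)$, as required. I expect the main obstacle to be precisely this permutation bookkeeping: everything would be routine if $P_u\equiv P_y$, but ties in $y$ force $P_u\ne P_y$, and the argument only closes because the shift $-\rho w$ is strictly monotone on each tied block, which simultaneously makes $\Pi_{\cal D}(z)$ constant on the block and pins the relevant Jacobian to within-block-permutation-invariant averaging blocks.
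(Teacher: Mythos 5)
Your proposal is correct and follows the same route as the paper: every assertion is reduced to Proposition \ref{pro_QD} applied at the base point $z=P_y y-\rho w$, using that $u\mapsto P_y u-\rho w$ (and $u \mapsto P_u u - \rho w$) lands in the neighborhood supplied by that proposition. The paper's own proof is only the one-line remark that the results ``can be easily derived from Proposition \ref{pro_QD} together with simple manipulations''; your tie-handling analysis --- the factorization $P_u=RP_y$ with $R$ permuting within tied blocks, the constancy of $\Pi_{\cal D}(z)$ on each block (forced by the strict monotonicity of $-\rho w$ there), the strict negativity of the within-block multipliers so that every $K\in{\cal K}_{\cal D}(z)$ contains the within-block constraints, and the resulting invariance $R^T\widehat{Q}R=\widehat{Q}$ --- is precisely the content of those omitted ``simple manipulations,'' worked out correctly.
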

\begin{proof}
	The desired results can be easily derived from Proposition \ref{pro_QD} together with simple manipulations.
\end{proof}

Define the multifunction ${\cal M}:\Re^n\rightrightarrows\Re^{n\times n}$ by
\begin{equation}\label{def_M}
{\cal M}(y):=\Big\{M\in {\cal S}^{n}\mid M=\Theta Q, \ \Theta\in  {\partial}_B{\rm Prox}_{\beta\|\cdot\|_1}(S_{\rho}(y)),\ Q\in {\cal Q}_{S_{\rho}}(y)\Big\},
\end{equation}
where the B-subdifferential of ${\rm Prox}_{\beta \|\cdot\|_1}(\cdot)$ at $\eta\in\Re^n$ is given by
\begin{equation*}
{\partial} _B{\rm Prox}_{\beta \|\cdot\|_1}(\eta) =
\left\{ {\rm Diag}(q) \left |
\begin{array}{ll}
q_i=0 & \mbox{if $|\eta_i| < \beta$}
\\
q_i\in \{0,1\}  & \mbox{if $|\eta_i| = \beta$}
\\
q_i =1 & \mbox{otherwise}
\end{array}\right.
\right\}.
\end{equation*}

We can view ${\cal M}(y)$ as the generalized Jacobian of ${\rm Prox}_p(\cdot)$ at $y$. The reason is shown in the following theorem, which is similar to what was done in \cite[Theorem 1]{li2018efficiently} for the fused lasso proximal mapping.
\begin{theorem}\label{theo_M}
	Let $\beta,\rho> 0$ and $y\in \Re^n$ be given. Then, the multifunction ${\cal M}$ is nonempty, compact, and upper-semicontinuous. For any $M\in{\cal M}(y)$, $M$ and $I-M$ are both symmetric and positive semidefinite. Moreover, there exists a neighborhood $\cal{Y}$ of $y$ such that for all $u\in \cal{Y}$,
	\begin{equation}\label{theo1_eq}
	{\rm Prox}_p(u)-{\rm Prox}_p(y)-M(u-y)=0,\ \forall M\in {\cal M}({u}).
	\end{equation}
\end{theorem}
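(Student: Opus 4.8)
The plan is to exploit the decomposition ${\rm Prox}_p = {\rm Prox}_{\beta\|\cdot\|_1}\circ S_\rho$ from Proposition \ref{prop_proximal_mapping}, so that the whole statement reduces to combining the already-established behaviour of $S_\rho$ (Proposition \ref{pro_s}) with an elementary local analysis of the soft-thresholding map ${\rm Prox}_{\beta\|\cdot\|_1}$. Throughout I will use two structural facts about the building blocks of $M = \Theta Q$: every $Q = P_y^T \widehat Q P_y \in {\cal Q}_{S_\rho}(y)$ is an orthogonal projection, since $\widehat Q = I_n - B_K^T(B_K B_K^T)^{-1} B_K$ is the orthogonal projector onto $\ker B_K$ and $P_y$ is orthogonal; and every $\Theta = {\rm Diag}(q)\in \partial_B {\rm Prox}_{\beta\|\cdot\|_1}(S_\rho(y))$ with $q\in\{0,1\}^n$ is a coordinate projection. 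Moreover, by Proposition \ref{prop_gamma} the blocks of $\widehat Q$ equal to $\frac{1}{n_i+1}{\bf E}_{n_i+1}$ sit exactly on the groups of indices on which the sorted vector $S_\rho(P_y y)$ is constant.

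For the set-valued properties, compactness is immediate because both $\partial_B {\rm Prox}_{\beta\|\cdot\|_1}(S_\rho(y))$ and ${\cal Q}_{S_\rho}(y)$ are finite sets, hence so is ${\cal M}(y)$. For nonemptiness I would exhibit one symmetric product: fix any $Q \in {\cal Q}_{S_\rho}(y)$ and choose $\Theta$ whose diagonal is constant on each pooled block of $Q$ — this is admissible since on such a block $|S_\rho(y)|$ is constant, so the defining rule forces $q_i\equiv 0$ (if the common value has modulus $<\beta$) or $q_i\equiv 1$ (if $>\beta$), while on a block with common modulus exactly $\beta$ one is free to take all $q_i = 1$. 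With $\Theta$ constant on the ${\bf E}$-blocks, $\Theta$ and $Q$ commute, so $\Theta Q \in {\cal S}^n$ and ${\cal M}(y)\neq\emptyset$. Upper-semicontinuity, since the sets are finite, amounts to the inclusion ${\cal M}(u)\subseteq {\cal M}(y)$ for $u$ near $y$; this follows from ${\cal Q}_{S_\rho}(u)\subseteq {\cal Q}_{S_\rho}(y)$ (Proposition \ref{pro_s}) together with $\partial_B {\rm Prox}_{\beta\|\cdot\|_1}(S_\rho(u))\subseteq \partial_B {\rm Prox}_{\beta\|\cdot\|_1}(S_\rho(y))$, the latter being a consequence of the continuity of $S_\rho$ and the fact that the defining rule only gains freedom at the kink $|\eta_i| = \beta$.

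Positive semidefiniteness is then a short observation. Take any $M\in {\cal M}(y)$. Since $M\in {\cal S}^n$, symmetry gives $\Theta Q = M = M^T = Q^T \Theta^T = Q\Theta$, so $\Theta$ and $Q$ commute; using $\Theta^2 = \Theta$ and $Q^2 = Q$ we get $M^2 = \Theta Q\Theta Q = \Theta^2 Q^2 = \Theta Q = M$. Hence $M$ is a symmetric idempotent, i.e. an orthogonal projection, so its eigenvalues lie in $\{0,1\}$ and both $M\succeq 0$ and $I_n - M\succeq 0$ (the latter being the complementary projection).

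Finally, for the Jacobian identity \eqref{theo1_eq}, by Proposition \ref{pro_s} there is a neighborhood on which $S_\rho(u) = S_\rho(y) + Q(u-y)$ for every $Q\in {\cal Q}_{S_\rho}(u)$. It remains to show that ${\rm Prox}_{\beta\|\cdot\|_1}$ admits an exact first-order expansion around $z_0 := S_\rho(y)$, namely ${\rm Prox}_{\beta\|\cdot\|_1}(z) = {\rm Prox}_{\beta\|\cdot\|_1}(z_0) + \Theta(z - z_0)$ for $z$ close to $z_0$ and any $\Theta\in \partial_B {\rm Prox}_{\beta\|\cdot\|_1}(z)$. This is checked coordinatewise on the scalar soft-threshold $s_\beta(t) = {\rm sign}(t)\max(|t|-\beta,0)$, distinguishing $|z_{0,i}|<\beta$, $|z_{0,i}|>\beta$, and the kink $|z_{0,i}| = \beta$; in each regime the identity holds exactly for all $t$ in a one-sided or two-sided neighborhood, the kink case being the only delicate one, where one uses that $s_\beta$ vanishes at $z_{0,i}=\pm\beta$ and that the relevant slope is the B-subdifferential evaluated at $t$ itself, which equals $1$ or $0$ according to whether $t$ is on the active or inactive side. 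Composing the two expansions and intersecting the two neighborhoods (using continuity of $S_\rho$ so that $S_\rho(u)$ stays near $z_0$) yields ${\rm Prox}_p(u) - {\rm Prox}_p(y) = \Theta Q (u-y) = M(u-y)$ for every $M = \Theta Q\in {\cal M}(u)$, which is exactly \eqref{theo1_eq}. I expect the kink analysis of the soft-threshold expansion, and ensuring the neighborhoods from the two propositions can be taken simultaneously, to be the only points requiring genuine care; the rest is structural bookkeeping.
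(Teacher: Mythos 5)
Your proof is correct, and although it follows the same skeleton as the paper (the decomposition ${\rm Prox}_p={\rm Prox}_{\beta\|\cdot\|_1}\circ S_\rho$ from Proposition \ref{prop_proximal_mapping}, Proposition \ref{pro_s} for the inner map, and upper semicontinuity of $\partial_B{\rm Prox}_{\beta\|\cdot\|_1}$ composed with continuity of $S_\rho$), it departs from the paper at the two substantive points, in both cases with a genuinely different and arguably cleaner argument. For positive semidefiniteness, the paper conjugates by $P_y$, invokes Proposition \ref{prop_gamma} to write $M=P_y^T(\widetilde{\Theta}\Gamma)P_y$, and argues block by block, using symmetry to force each $\widetilde{\Theta}_j$ to be all ones or all zeros on every averaging block $\frac{1}{n_j+1}{\bf E}_{n_j+1}$, concluding $0\preceq\widetilde{\Theta}\Gamma\preceq I$; your route --- symmetry of $M$ forces $\Theta Q=Q\Theta$, and a product of two commuting symmetric idempotents is a symmetric idempotent, so $M$ is an orthogonal projection with spectrum in $\{0,1\}$ --- reaches the same conclusion (indeed a stronger one, since both $M$ and $I-M$ are then complementary projections) in a few lines without touching the block structure; what the paper's longer computation buys is the explicit diagonal-plus-low-rank form of $M$ that is reused later for the efficient assembly of $AMA^T$, but that structure is not needed for the theorem itself. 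For the exact expansion \eqref{theo1_eq}, the paper simply cites \cite[Theorem 7.5.17]{facchinei2007finite} via piecewise affinity of ${\rm Prox}_{\beta\|\cdot\|_1}$, whereas you verify the scalar soft-thresholding identity coordinatewise, including the kink cases $|z_{0,i}|=\beta$, and then compose with Proposition \ref{pro_s}; your version is self-contained and delivers exactly what the citation does. You also supply an actual construction for nonemptiness (choosing $\Theta$ constant on the pooled blocks so that $\Theta Q$ is symmetric), a point the paper passes over as easily seen even though the symmetry requirement $M\in{\cal S}^n$ in the definition of ${\cal M}(y)$ makes it not entirely vacuous.
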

\begin{proof}
	From the definition of ${\cal M}$, we easily see that it is nonempty and compact. We know that ${\partial}_B{\rm Prox}_{\beta\|\cdot\|_1}(\cdot)$ is upper semicontinuous, which, together with the property on $S_{\rho}(\cdot)$ in Proposition \ref{pro_s}, implies that ${\cal M}$ is upper-semicontinuous. In addition, by noting that ${\rm Prox}_{\beta\|\cdot\|_1}(\cdot)$ is piecewise affine, we have that \eqref{theo1_eq} follows from \cite[Theorem 7.5.17]{facchinei2007finite}.
	
	Next we only need to prove that any $M\in {\cal M}(y)$ is symmetric and positive semidefinite. The symmetry follows directly from the definition. From \eqref{def_M} and Lemma \ref{lemma_inv}, one knows that for any $M\in{\cal M}(y)$, there exists a $0$-$1$ diagonal matrix $\Theta\in {\partial}_B {\rm Prox}_{\beta\|\cdot\|_1}(S_{\rho}(y))$ and $K\in {\cal K}_{\cal D}(P_y y -\rho w)$ such that
	\begin{equation*}
	\begin{split}
	M &=\Theta[P_y^T( I_n-B_K^T(B_K B_K^T)^{-1}B_K ) P_y]\\
	&= \Theta P_y^T (I_n-B^T(\Sigma_K B B^T \Sigma_K)^{\dagger}B ) P_y.
	\end{split}
	\end{equation*}
	Since $\Sigma_K\in\Re^{(n-1)\times(n-1)}$ is an $N$-block diagonal matrix with
	\[
	\Sigma_K={\rm Diag}\{\Lambda_1,\cdots,\Lambda_N\},
	\]
	where for $i=1,\cdots,N$, $\Lambda_i$ is either ${\bf O}_{n_i}$ or $I_{n_i}$, and any two consecutive blocks are not of the same type. Denote $J:=\{j\mid \Lambda_j=I_{n_j},j=1,\cdots,N\}.$ It then follows from Proposition \ref{prop_gamma} that
	\[
	M=\Theta P_y ^T\Gamma P_y,
	\]
	where $\Gamma={\rm Diag}(\Gamma_1,\cdots,\Gamma_N)$ is defined as in Proposition \ref{prop_gamma}. Define $\widetilde{\Theta}\in\Re^{n\times n}$ as
	\[
	\widetilde{\Theta}=P_y \Theta P_y^T={\rm Diag } (P_y{\rm diag}(\Theta)),
	\]
	which is also a $0$-$1$ diagonal matrix. Thus,
	\begin{equation*}
	M = P_y^T\widetilde{\Theta} P_y  P_y ^T\Gamma P_y = P_y^T \widetilde{\Theta} \Gamma P_y= P_y^T (\widetilde{\Theta} \Gamma) P_y .
	\end{equation*}
	In order to prove that $M$ is positive semidefinite, it suffices to show that $\widetilde{\Theta}\Gamma$ is positive semidefinite.
	Note that $\widetilde{\Theta}$ can be decomposed as
	$\widetilde{\Theta}={\rm Diag}(\widetilde{\Theta}_1,\cdots,\widetilde{\Theta}_N)$ and hence $\widetilde{\Theta} \Gamma = {\rm Diag}(\widetilde{\Theta}_1\Gamma_1,\cdots,\widetilde{\Theta}_N\Gamma_N) $, we only need to prove that for all $j=1,\cdots,N$, $\widetilde{\Theta}_j\Gamma_j$ is positive semidefinite. When $\Gamma_j$ is an identity matrix, it is obvious that  $\widetilde{\Theta}_j\Gamma_j=\widetilde{\Theta}_j$ and hence $\widetilde{\Theta}_j\Gamma_j$ is positive semidefinite. When $\Gamma_j$ is not an identity matrix but of the form $\Gamma_j=\frac{1}{n_j+1}{\bf E}_{n_j+1}$ from Proposition \ref{prop_gamma}, then we have
	\[
	\Bigg\{\sum_{t=1}^{j-1}n_t +1, \sum_{t=1}^{j-1}n_t +2, \cdots, \sum_{t=1}^{j}n_t \Bigg\}\subseteq K \subseteq {\cal I}_{\cal D}(P_y y -\rho w),
	\]
	which means that
	\[
	\big( \Pi_{\cal D}(P_y y-\rho w)\big)_{i}=\big( \Pi_{\cal D}(P_y y -\rho w)\big)_{i+1}, \ \forall i\in \Bigg\{\sum_{t=1}^{j-1}n_t +1,\cdots,\sum_{t=1}^{j}n_t\Bigg\} .
	\]
	As one can see no matter what value $|\big( \Pi_{\cal D}(P_y y -\rho w)\big)_{\sum_{t=1}^{j-1}n_t +1}|$ takes, ${\rm diag}(\widetilde{\Theta}_j)$ should be all ones or all zeros, otherwise it will contradict the fact that $\widetilde{\Theta}_j\Gamma_j$ is symmetric. That is to say,
	\[
	\widetilde{\Theta}_j=\textbf{O}_{n_j+1} \quad \mbox{or} \quad I_{n_j+1}.
	\]
	Thus, $\widetilde{\Theta}_j\Gamma_j=\textbf{O}_{n_j+1}$ or $\frac{1}{n_j+1}{\bf E}_{n_j+1}$, which is obviously positive semidefinite.
	
	For the case of $I-M$, we have that
	\[
	I-M=I-P_y^T (\widetilde{\Theta} \Gamma) P_y=P_y^T (I-\widetilde{\Theta} \Gamma) P_y.
	\]
	From the previous derivation, we can see that $0\preceq \widetilde{\Theta} \Gamma \preceq I$, which yields that $I-M$ is positive semidefinite. This completes the proof.
\end{proof}

For later purpose, we recall the concept of semismoothness introduced in \cite{mifflin1977semismooth,qi1993nonsmooth,kummer1988newton,sun2002semismooth}.
\begin{definition}
	Let $f:{\cal O}\subseteq \Re^n\to\Re^m $ be a locally Lipschitz continuous function on the open set ${\cal O}$ and ${\cal K}:{\cal O}\rightrightarrows\Re^{m\times n}$ be a nonempty, compact valued and upper-semicontinuous multifunction. We say that $f$ is {\rm semismooth} at $x\in{\cal O}$ with respect to the multifunction ${\cal K}$ if
	(i) $f$ is directionally differentiable at $x$; and
	(ii) for any $\Delta x\in \Re^n$ and $V\in{\cal K}(x+\Delta x)$ with $\Delta x\to 0$,
	\begin{equation}\label{ss}
	f(x+\Delta x)-f(x)-V(\Delta x)=o(\|\Delta x\|).
	\end{equation}
	Furthermore, if \eqref{ss} is replaced by
	\begin{equation}\label{strongly_ss}
	f(x+\Delta x)-f(x)-V(\Delta x)=O(\|\Delta x\|^{1+\gamma}),
	\end{equation}
	where $\gamma>0$ is a constant, then $f$ is said to be {\rm $\gamma$-order (strongly if $\gamma=1$) semismooth} at $x$ with respect to ${\cal K}$. We say that $f$ is a semismooth function on ${\cal O}$ with respect to ${\cal K}$ if it is semismooth everywhere in ${\cal O}$ with respect to ${\cal K}$.
\end{definition}

\begin{remark}\label{M_order_semi}
	Since ${\rm Prox}_p(\cdot)$ is a Lipschitz continuous piecewise affine function, it follows from \cite[Lemma 4.6.1]{facchinei2007finite} that it is directionally differentiable at any point. Combining with Theorem \ref{theo_M}, we conclude that for any arbitrary constant $\gamma>0$, ${\rm Prox}_p(\cdot)$ is $\gamma$-order semismooth on $\Re^n$ with respect to ${\cal M}$.
\end{remark}

\subsection{Finding a computable element in ${\cal M}(y)$}\label{sub_computable_M}

In order for the multifunction that we defined in \eqref{def_M} to be useful in designing algorithms for problem \eqref{given-P}, we need to construct at least one computable element explicitly in ${\cal M}(y)$ for any given $y\in \Re^n$. Let $\Sigma={\rm Diag}(\sigma)\in \Re^{(n-1)\times(n-1)}$ be defined as
\begin{equation}\label{def_sigma}
\sigma_i=\left\{
\begin{array}{ll}
1, & \mbox{if $i\in {\cal I}_{\cal D}(P_y y -\rho w)$,}\\[5pt]
0, & \mbox{otherwise,}
\end{array}\right.\quad \mbox{for $i=1,2,\cdots,n-1$,}
\end{equation}
where ${\cal I}_{\cal D}(\cdot)$ is defined in \eqref{activeset}, and $\Theta={\rm Diag}(\theta)\in \Re^{n\times n}$ be defined as
\begin{equation}\label{def_theta}
\theta_i = \left \{
\begin{array}{ll}
0, & \mbox{if $|S_{\rho}(y)|_{i} \leq \beta $}\\[2mm]
1, & \mbox{otherwise}
\end{array}\right.
\quad \mbox{for $i=1,2,\cdots,n$.}
\end{equation}
From Proposition \ref{pro_QD} and Proposition \ref{lemma_inv}, we have that $M\in {\cal M}(y)$, which is given by
\begin{equation*}
\begin{split}
M & =\Theta P_y^T (I_n-B_{{\cal I}_{\cal D}(P_y y -\rho w)}^T(B_{{\cal I}_{\cal D}(P_y y -\rho w)} B_{{\cal I}_{\cal D}(P_y y -\rho w)}^T)^{\dagger}B_{{\cal I}_{\cal D}(P_y y -\rho w)})P_y\\
& =\Theta P_y^T (I_n-B^T(\Sigma B B^T \Sigma)^{\dagger}B)P_y.
\end{split}
\end{equation*}
Then we can apply Proposition \ref{prop_gamma} to compute $M$ explicitly.

\section{A semismooth Newton augmented Lagrangian method for the dual problem} \label{sect:ssnal}

The primal form of our concerned problem \eqref{given-P} can be written as
\begin{equation}\tag{P}\label{P}
\min_{x\in \Re^n} \ \  \displaystyle\Big\{f(x):=\frac{1}{2} \|Ax-b\|^2+p(x)\Big\},
\end{equation}
and the dual of \eqref{P} admits the following equivalent minimization form
\begin{equation}\tag{D}\label{D}
\min_{\xi\in\Re^m,u \in \Re^n} \ \displaystyle\Big\{  \frac{1}{2}\|\xi\|^2+ \langle b,\xi \rangle+p^*(u)  \mid  A^T \xi+u=0 \Big\}.
\end{equation}
The Lagrangian function associated with \eqref{D} is defined by
\[
l(\xi,u;x):= \frac{1}{2}\|\xi\|^2+ \langle b,\xi \rangle +p^*(u) -\langle x,A^T \xi+u\rangle.
\]
Let $\sigma>0$ be given. Then, the corresponding augmented Lagrangian function is given by
\begin{equation*}
{\cal L}_\sigma( \xi,u; x):= l(\xi,u;x)+\frac{\sigma}{2}\|A^T \xi+u\|^2.
\end{equation*}

\subsection{A semismooth Newton augmented Lagrangian method for \eqref{D}}

We denote the whole algorithm as {\sc Ssnal} since a semismooth Newton method ({\sc Ssn}) is used in solving the subproblem of the inexact augmented Lagrangian method ({\sc Alm}) \cite{rockafellar1976augmented}. We briefly describe the {\sc Ssnal} algorithm as follows.

\begin{algorithm}
	\caption{\small {\bf: ({\sc Ssnal}) A semismooth Newton augmented Lagrangian method for \eqref{D} }}
	\hspace*{0.02in} \raggedright {\bf Input:} $\sigma_0 >0$, $(\xi^0,u^0,x^0)\in\Re^m\times \Re^n \times \Re^n$ and $k=0$.\\
	
	\begin{algorithmic}[1]
		\STATE  Approximately compute
		\begin{equation}\label{d:alm-sub}
		\xi^{k+1} \approx \underset{\xi\in\Re^m}{\rm argmin} \Big\{\psi_k(\xi):=\inf_u {\cal L}_{\sigma_k}(\xi,u; x^k)\Big\}
		\end{equation}
		to satisfy the conditions \eqref{stopA}, \eqref{stopB1}, and \eqref{stopB2} below.
		
		\STATE $u^{k+1}=(x^k/\sigma_k- A^T \xi^{k+1})-{\rm Prox}_{p}( x^k/\sigma_k- A^T\xi^{k+1})$.
		
		\STATE $x^{k+1}  =x^k-\sigma_k (A^T \xi^{k+1}+u^{k+1})=\sigma_k{\rm Prox}_{ p}({x^k}/{\sigma_k}- A^T \xi^{k+1}).$
		
		\STATE Update $\sigma_{k+1} \uparrow \sigma_\infty\leq \infty$,  $k\leftarrow k+1$, and go to Step 1.
	\end{algorithmic}
\end{algorithm}

For the {\sc Ssnal} algorithm, we use the following implementable stopping criteria as in \cite{rockafellar1976augmented,rockafellar1976monotone}:
\begin{align}
\| \nabla \psi_k (\xi^{k+1}) \|&\leq \epsilon_k/\sqrt{\sigma_k},\ \sum_{k=0}^{\infty} \epsilon_k <\infty,\tag{A}\label{stopA}\\
\| \nabla \psi_k (\xi^{k+1}) \|&\leq  \delta_k \sqrt{\sigma_k}\|A^T \xi^{k+1}+u^{k+1}\|,\ \sum_{k=0}^{\infty} \delta_k <\infty,\tag{B1}\label{stopB1}\\
\| \nabla \psi_k (\xi^{k+1}) \|&\leq \delta'_k \| A^T \xi^{k+1}+u^{k+1}\|,\ 0 \leq \delta'_k \rightarrow 0,\tag{B2}\label{stopB2}
\end{align}
where $\{\epsilon_k\}$, $\{\delta_k\}$, $\{\delta'_k\}$ are given nonnegative error tolerance sequences.

Define the following maximal monotone operators \cite{rockafellar1976augmented}
\[
{\cal T}_f(x):=\partial f(x),\ {\cal T}_l(\xi,u;x):=\{(\xi',u',x')\mid (\xi',u',-x')\in \partial l(\xi,u;x)\}.
\]
The piecewise linear-quadratic property of $f$ leads to the fact that ${\cal T}_f$ and ${\cal T}_l$ satisfy the error bound condition \cite{luque1984asymptotic} at point $0$ with positive modulus $a_{f}$ and $a_l$, respectively \cite{robinson1981some,sun1986monotropic}. That is to say, there exists $\varepsilon>0$ such that if ${\rm dist}(0,{\cal T}_f(x))\leq \varepsilon$, then
\begin{equation}\label{define_af}
{\rm dist}(x, \Omega_p)\leq a_f {\rm dist}(0,{\cal T}_f(x)).
\end{equation}
Besides, there exists $\varepsilon'>0$ such that if ${\rm dist}(0,{\cal T}_l(\xi,u;x))\leq \varepsilon'$, then
\begin{equation}\label{define_al}
{\rm dist}((\xi,u,x), (\xi^*,u^*)\times \Omega_p)\leq a_l {\rm dist}(0,{\cal T}_l(\xi,u;x)),
\end{equation}
where $(\xi^*,u^*)$ is the unique optimal solution of \eqref{D}.

The global and local convergence of the {\sc Ssnal} algorithm have been studied in \cite{rockafellar1976augmented,rockafellar1976monotone,luque1984asymptotic}. Here we simply state some relevant results.

\begin{theorem}\label{local_global}
	(1)   Let $\{(\xi^k,u^k,x^k)\}$ be the infinite sequence generated by the {\sc Ssnal} algorithm with stopping criterion \eqref{stopA}. Then, the sequence $\{x^k\}$ converges to an optimal solution of \eqref{P}. In addition, $\{(\xi^k,u^k)\}$  converges to the unique optimal solution $(\xi^*,u^*)$ of \eqref{D}.
	
	(2) For the sequence  $\{(\xi^k,u^k,x^k)\}$  generated by the {\sc Ssnal} algorithm with stopping criteria \eqref{stopA} and \eqref{stopB1},  one has that for all $k$ sufficiently large,
	\begin{equation}\label{convergence_x}
	{\rm dist}(x^{k+1},\Omega_p)\leq \theta_k{\rm dist}(x^{k},\Omega_p),
	\end{equation}
	where $\theta_k=(a_f(a_f^2+\sigma_k^2)^{-1/2}+2\delta_k)(1-\delta_k)^{-1}\rightarrow \theta_{\infty}=a_f(a_f^2+\sigma_{\infty}^2)^{-1/2}<1$ as $k\rightarrow +\infty$, and $a_f$ is from \eqref{define_af}. If the stopping criterion \eqref{stopB2} is also satisfied, it holds that for $k$ sufficiently large,
	\begin{equation}\label{convergence_xiu}
	\|(\xi^{k+1},u^{k+1})-(\xi^*,u^*)\|\leq \theta_k'\|x^{k+1}-x^k\|,
	\end{equation}
	where $\theta_k'=a_l(1+\delta_k')/\sigma_k\rightarrow a_l/\sigma_{\infty}$ as $k\rightarrow +\infty$, and $a_l$ is from \eqref{define_al}.
\end{theorem}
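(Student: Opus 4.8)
The plan is to recognize the {\sc Ssnal} algorithm as an inexact proximal point algorithm (PPA) and then invoke the classical convergence theory of Rockafellar \cite{rockafellar1976augmented,rockafellar1976monotone} together with the refinement of Luque \cite{luque1984asymptotic}. First I would record the fundamental equivalence between the augmented Lagrangian method applied to \eqref{D} and the PPA. Concretely, the inner minimization over $u$ in \eqref{d:alm-sub} can be carried out in closed form via Moreau's identity, so that $\psi_k$ is continuously differentiable; Step 2 and Step 3 of the algorithm then yield the recursion $x^{k+1}\approx (I+\sigma_k {\cal T}_f)^{-1}(x^k)$, i.e. the sequence $\{x^k\}$ is generated by the inexact PPA applied to ${\cal T}_f=\partial f$ (whose zeros are exactly $\Omega_p$), while the full triple $\{(\xi^k,u^k,x^k)\}$ is generated by the inexact PPA applied to ${\cal T}_l$. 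A routine but essential verification is that the gradient-based stopping conditions \eqref{stopA}, \eqref{stopB1}, \eqref{stopB2} coincide with Rockafellar's abstract inexactness criteria (A), (B), (B$'$) on the proximal subproblem residual; this uses that $\|\nabla\psi_k(\xi^{k+1})\|$ controls the distance from $\xi^{k+1}$ to the exact subproblem solution, together with the identity $A^T\xi^{k+1}+u^{k+1}=(x^k-x^{k+1})/\sigma_k$ from Step 3, which explains the appearance of the factors $\sqrt{\sigma_k}$ and $\sigma_k$.

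Second, the two error bound conditions \eqref{define_af} and \eqref{define_al} have already been recorded above as consequences of the piecewise linear-quadratic structure of $f$, via the polyhedrality of ${\cal T}_f$ and ${\cal T}_l$ and Robinson's theorem \cite{robinson1981some,sun1986monotropic}. These are the key structural inputs that upgrade plain convergence to a linear rate, so I would simply invoke them with their moduli $a_f$ and $a_l$ rather than re-derive them.

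With these two ingredients in place, the two parts of the theorem follow directly. For part (1), global convergence of the inexact PPA under the summable-error criterion \eqref{stopA} gives $x^k\to \bar x\in\Omega_p$; the convergence of $(\xi^k,u^k)$ to the unique $(\xi^*,u^*)$ follows because, on the feasible manifold $u=-A^T\xi$, the dual objective reduces to $\tfrac12\|\xi\|^2+\langle b,\xi\rangle + p^*(-A^T\xi)$, which is strictly convex in $\xi$; hence $\xi^*$ is unique, $u^*=-A^T\xi^*$ is determined, and the continuity of ${\rm Prox}_p(\cdot)$ transfers convergence to the dual iterates. For part (2), combining \eqref{stopA}--\eqref{stopB1} with the error bound \eqref{define_af} yields the $Q$-linear estimate \eqref{convergence_x}, in which the displayed constant $\theta_k=(a_f(a_f^2+\sigma_k^2)^{-1/2}+2\delta_k)(1-\delta_k)^{-1}$ is precisely Rockafellar's rate modulus; adding criterion \eqref{stopB2} together with the error bound \eqref{define_al} for ${\cal T}_l$ then produces the companion estimate \eqref{convergence_xiu} on the dual iterates with $\theta_k'=a_l(1+\delta_k')/\sigma_k$.

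I expect the main obstacle to be the first step rather than the convergence analysis itself: carefully checking that the infimum over $u$ renders $\psi_k$ continuously differentiable and that the gradient-based stopping criteria are genuinely equivalent to Rockafellar's residual-based criteria. Once the PPA reformulation is made precise and the polyhedral error bounds are in hand, both conclusions are a black-box application of the cited theorems.
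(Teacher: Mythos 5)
Your overall strategy is exactly the paper's: view the inexact {\sc Alm} on \eqref{D} as an inexact PPA, get part (1) from \cite[Theorem 4]{rockafellar1976augmented}, and get \eqref{convergence_x} from \cite[Theorem 2.1]{luque1984asymptotic} applied to ${\cal T}_f$ together with the polyhedral error bound \eqref{define_af}. Up to that point the proposal is correct and matches the paper, including the (correct) observation that the main work is the bookkeeping identifying \eqref{stopA}--\eqref{stopB2} with Rockafellar's abstract criteria via $A^T\xi^{k+1}+u^{k+1}=(x^k-x^{k+1})/\sigma_k$ and the strong convexity of $\psi_k$.

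The gap is in the last step. You assert that \eqref{convergence_xiu} is also ``a black-box application of the cited theorems,'' i.e.\ of the classical Rockafellar/Luque results, given \eqref{stopB2} and the error bound \eqref{define_al}. That is not accurate: the classical dual-rate result (Theorem 5 of \cite{rockafellar1976augmented}, and its counterpart in \cite{rockafellar1976monotone}) is proved under the hypothesis that ${\cal T}_l^{-1}$ is \emph{Lipschitz continuous at the origin}, which in particular forces ${\cal T}_l^{-1}(0)=(\xi^*,u^*)\times\Omega_p$ to be a singleton. Here $\Omega_p$ need not be a singleton (no full-column-rank assumption is made on $A$), and the paper only has the strictly weaker calmness/error-bound condition \eqref{define_al}; likewise, Luque's Theorem 2.1 controls only the PPA sequence itself, i.e.\ $\{x^k\}$, and says nothing about the dual iterates $(\xi^k,u^k)$. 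This is precisely why the paper's proof does not invoke the classical theorems for \eqref{convergence_xiu}, but instead combines \cite{cui2016asymptotic} with \cite[Remark 1]{li2016highly}, which extend the dual estimate to the error-bound setting. So the logical structure of your final step is right (criterion \eqref{stopB2} plus the bound \eqref{define_al} do yield \eqref{convergence_xiu} with $\theta_k'=a_l(1+\delta_k')/\sigma_k$), but the theorems you plan to quote do not deliver it; you would either need to reprove Rockafellar's dual estimate under calmness or cite the more recent results that do so.
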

\begin{proof}
	The first part of this theorem can be obtained from \cite[Theorem 4]{rockafellar1976augmented}. Since ${\cal T}_f$ and ${\cal T}_l$ satisfy the error bound condition, it follows from \cite[Theorem 2.1]{luque1984asymptotic} that \eqref{convergence_x} holds. If \eqref{stopA}, \eqref{stopB1} and \eqref{stopB2} are all satisfied, combing \cite{cui2016asymptotic} with \cite[Remark 1]{li2016highly}, we get the desired result that \eqref{convergence_xiu} holds. This completes the proof.
\end{proof}

\subsection{A semismooth Newton method for the subproblem} \label{ssn_dual}

In this subsection, we present an efficient semismooth Newton method for solving the {\sc Alm} subproblem \eqref{d:alm-sub}.
Given $\tilde{x}\in \Re^n$ and $\sigma>0$, we consider the following minimization problem
\begin{equation}\label{eq:subprob}
\min_{\xi\in\Re^m}\left\{\psi(\xi):=\inf_{u}{\cal L}_{\sigma}( \xi,u; \tilde{x})\right\},
\end{equation}
where using the Moreau's identity, we get that
\begin{equation*}
\begin{split}
\psi(\xi)=\inf_{u}{\cal L}_{\sigma}( \xi,u; \tilde{x}) & = \frac{1}{2}\|\xi\|^2+\langle b,\xi\rangle +p^*({\rm Prox}_{p^*/\sigma}(-A^T \xi +\tilde{x}/\sigma) ) \\
& \quad+\frac{1}{2\sigma}\|{\rm Prox }_{\sigma p}( -\sigma A^T \xi +\tilde{x}) \|^2-\frac{1}{2\sigma}\|\tilde{x}\|^2.
\end{split}
\end{equation*}
Since $\psi(\cdot)$ is strongly convex and continuously differentiable, the minimization problem \eqref{eq:subprob} has a unique solution $\hat{\xi}$ which can be obtained via solving the following nonsmooth equation
\begin{equation}\label{d-eq-xi}
0=\nabla \psi(\xi)=\xi+b -A {\rm Prox }_{\sigma p}(\tilde{x} -\sigma A^T \xi)=\xi+b -\sigma A {\rm Prox }_{p}(\tilde{x}/\sigma-A^T \xi).
\end{equation}
Here we use the fact that ${\rm Prox}_{\sigma p}(z)=\sigma {\rm Prox}_p(z/\sigma)$ for any $z\in \Re^n$.

Define the multifunction ${\cal V}:\Re^m\rightrightarrows\Re^{m\times m}$ by:
\[
{\cal V}(\xi) :=\Big\{V \in \Re^{m\times m} \mid V= I_m+\sigma A M A^T ,M\in {\cal M}(\tilde{x}/\sigma - A^T \xi)\Big\},
\]
where ${\cal M}(\cdot)$ is the multifunction defined in \eqref{def_M}. By virtue of Theorem \ref{theo_M}, we know that ${\cal V}$ is nonempty, compact, and upper-semicontinuous. It is obvious that for any $\xi\in\Re^m$, all elements of ${\cal V}(\xi)$ are symmetric and positive definite. In addition, $\nabla \psi$ is $\gamma$-order semismooth on $\Re^m$ with respect to ${\cal V}$, for any $\gamma>0$.

We shall apply a semismooth Newton ({\sc Ssn}) method to solve (\ref{d-eq-xi}) as follows and could expect to get a fast superlinear or even quadratic convergence.

\begin{algorithm}
	\caption{\small {\bf: ({\sc Ssn}) A semismooth Newton method for solving \eqref{d-eq-xi} }}
	\hspace*{0.02in} \raggedright {\bf Input:} $\mu \in (0, 1/2)$, $\bar{\eta} \in (0, 1)$, $\tau \in (0,1]$, $\delta \in (0, 1)$, $\xi^0,\tilde{x},\sigma$, and $j=0$.\\
	
	\begin{algorithmic}[1]
		\STATE   Choose $V_j \in {\cal V}(\xi^j)$. Solve the following linear system
		\begin{equation}\label{d-eqn-epsk}
		V_j h=-\nabla \psi(\xi^j)
		\end{equation}
		exactly or by the conjugate gradient (CG) algorithm
		to find $h^j$ such that
		\[
		\| V_j h^j  + \nabla \psi(\xi^j)\|\leq\min(\bar{\eta}, \| \nabla \psi(\xi^j)\|^{1+\tau}).
		\]
		
		\STATE  Set $\alpha_j = \delta^{m_j}$, where $m_j$ is the first nonnegative integer $m$ for which
		\begin{equation*}
		\psi(\xi^j + \delta^{m} h^j)\leq \psi(\xi^j)+ \mu \delta^{m}\langle \nabla \psi(\xi^j), h^j \rangle.
		\end{equation*}
		
		\STATE Set $\xi^{j+1} = \xi^j + \alpha_j \, h^j$, $j\leftarrow j+1$, and go to Step 1.
	\end{algorithmic}
\end{algorithm}

The convergence analysis for the {\sc Ssn} algorithm can be established as in \cite[Theorem 3]{li2018efficiently}.
\begin{theorem}
	Let $\{\xi^j\}$ be the infinite sequence generated by the {\sc Ssn} algorithm. Then, $\{\xi^j\}$ converges to the unique optimal solution $\hat{\xi}$ of problem \eqref{eq:subprob} and $\|\xi ^{j+1}-\hat{\xi}\|=O(\|\xi ^{j}-\hat{\xi}\|^{1+\tau})$.
\end{theorem}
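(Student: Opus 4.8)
The plan is to treat this as a globalized inexact semismooth Newton method and to separate the argument into a global convergence part and a local rate part, relying on three facts already at hand. First, $\psi$ is strongly convex and continuously differentiable, so it has a unique minimizer $\hat\xi$, is coercive, and satisfies $\nabla\psi(\hat\xi)=0$. Second, $\nabla\psi$ is $\gamma$-order semismooth on $\Re^m$ with respect to ${\cal V}$ for every $\gamma>0$. Third, by Theorem \ref{theo_M} every $M$ in ${\cal M}(\cdot)$ satisfies $0\preceq M\preceq I_m$, so each $V=I_m+\sigma A M A^T\in{\cal V}(\xi)$ obeys the uniform two-sided bound $I_m\preceq V\preceq(1+\sigma\|A\|^2)I_m$; in particular every chosen $V_j$ is symmetric positive definite with $\|V_j^{-1}\|\leq 1$ and uniformly bounded condition number.

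First I would establish global convergence $\xi^j\to\hat\xi$. Because $V_j\succ 0$ and $h^j$ solves \eqref{d-eqn-epsk} to the prescribed residual accuracy, $h^j$ is a descent direction with $\langle\nabla\psi(\xi^j),h^j\rangle<0$ whenever $\nabla\psi(\xi^j)\neq 0$, so the Armijo rule in Step 2 is well defined and yields a strictly decreasing sequence $\{\psi(\xi^j)\}$. Coercivity then confines $\{\xi^j\}$ to a bounded level set. Using the uniform conditioning of $V_j$ and the residual bound $\min(\bar\eta,\|\nabla\psi(\xi^j)\|^{1+\tau})$ to control the angle between $h^j$ and $-\nabla\psi(\xi^j)$, a standard sufficient-decrease (Zoutendijk-type) argument as in \cite[Theorem 3]{li2018efficiently} gives $\nabla\psi(\xi^j)\to 0$, and strong convexity forces the limit to be the unique stationary point, so $\xi^j\to\hat\xi$.

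Next I would derive the local rate. Near $\hat\xi$ I would combine (i) the $\gamma$-order semismoothness of $\nabla\psi$ with the choice $\gamma\geq\tau$, (ii) $\nabla\psi(\hat\xi)=0$ together with the Lipschitz continuity of $\nabla\psi$ to convert gradient magnitudes into distances, and (iii) the inexact-CG bound $\|V_jh^j+\nabla\psi(\xi^j)\|\leq\|\nabla\psi(\xi^j)\|^{1+\tau}$ (active once $\|\nabla\psi(\xi^j)\|$ is small), using $\|V_j^{-1}\|\leq 1$, to estimate the pure Newton iterate as
\[
\|\xi^j+h^j-\hat\xi\|=O(\|\xi^j-\hat\xi\|^{1+\tau}).
\]
It then remains to show that the line search eventually accepts the unit step. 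Exploiting $\mu\in(0,1/2)$ and the preceding estimate, a standard argument (as in \cite[Theorem 3]{li2018efficiently}) shows $m_j=0$ for all $j$ large, whence $\xi^{j+1}=\xi^j+h^j$ and the displayed bound gives the claimed $\|\xi^{j+1}-\hat\xi\|=O(\|\xi^j-\hat\xi\|^{1+\tau})$.

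The step I expect to be most delicate is the simultaneous bookkeeping of the semismoothness error and the inexact-CG residual into a single $1+\tau$ estimate, together with the verification that the Armijo test admits the full step in a neighborhood of $\hat\xi$. I would also emphasize that the exponent $1+\tau$ is imposed by the CG tolerance rather than by the semismoothness order: since $\nabla\psi$ is $\gamma$-order semismooth for arbitrary $\gamma>0$, an exact solve of \eqref{d-eqn-epsk} would in principle permit a faster rate, so $\tau$ is the binding parameter here.
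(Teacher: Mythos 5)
Your overall architecture coincides with the paper's proof: global convergence from the descent/line-search structure plus strong convexity of $\psi$, a pure-Newton estimate $\|\xi^j+h^j-\hat\xi\|=O(\|\xi^j-\hat\xi\|^{1+\tau})$ obtained by combining semismoothness with the residual tolerance, and eventual acceptance of the unit step for $\mu\in(0,1/2)$; the paper assembles exactly these pieces by citing \cite[Proposition 3.3, Theorems 3.4--3.5]{zhao2010newton}, \cite[Lemma 7.5.2, Proposition 8.3.18]{facchinei2007finite} and \cite[Proposition 7]{li2018efficiently}. One point in your favor: the explicit two-sided bound $I_m\preceq V_j\preceq(1+\sigma\|A\|^2)I_m$, which follows from $0\preceq M\preceq I$ in Theorem \ref{theo_M}, gives $\|V_j^{-1}\|\le 1$ for every iterate, whereas the paper only gets uniform boundedness of $\{\|V_j^{-1}\|\}$ for sufficiently large $j$ via compactness and upper semicontinuity of ${\cal V}$; this is a genuine simplification available in this specific setting.

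There is, however, one genuine flaw: your claim that $h^j$ is a descent direction ``because $V_j\succ 0$ and $h^j$ solves \eqref{d-eqn-epsk} to the prescribed residual accuracy'' is false as stated. The tolerance is $\min(\bar\eta,\|\nabla\psi(\xi^j)\|^{1+\tau})$, so when $\|\nabla\psi(\xi^j)\|\geq 1$ it only forces $\|V_jh^j+\nabla\psi(\xi^j)\|\leq\bar\eta$. A short computation shows that a vector $h$ meeting this bound with $\langle\nabla\psi(\xi^j),h\rangle\geq 0$ exists whenever the condition number of $V_j$ is of order $\bar\eta^{-2}$ or larger, which is entirely possible here since $\lambda_{\max}(V_j)$ can be of order $\sigma\|A\|^2$; for such an $h$ the Armijo loop in Step 2 never terminates, so well-definedness of the algorithm and the strict decrease of $\{\psi(\xi^j)\}$ do not follow from your premises. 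The descent property comes not from the residual bound but from how $h^j$ is actually produced: either it is the exact solution $-V_j^{-1}\nabla\psi(\xi^j)$, or it is a CG iterate initialized at zero, in which case the monotone decrease of the CG quadratic model gives $\langle\nabla\psi(\xi^j),h^j\rangle<-\frac{1}{2}\langle h^j,V_jh^j\rangle\leq-\frac{1}{2}\|h^j\|^2<0$. This is precisely why the paper routes the argument through \cite[Proposition 3.3]{zhao2010newton}, and the same mechanism supplies the uniform inequality $-\langle\nabla\psi(\xi^j),h^j\rangle\geq\hat\delta\|h^j\|^2$ which the paper states explicitly and which the unit-step-acceptance argument requires; you leave that inequality implicit, though with $V_j\succeq I_m$ it follows at once from the CG or exact-solve property. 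Once the descent claim is repaired in this way, the remainder of your plan (the $1+\tau$ estimate and the full-step argument, both of which are correct) goes through.
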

\begin{proof}
	According to \cite[Proposition 3.3 \& Theorem 3.4]{zhao2010newton} and the fact that $\psi(\cdot)$ is strongly convex, $\{\xi^j\}$ converges to the unique optimal solution $\hat{\xi}$ of problem \eqref{eq:subprob}. Since ${\cal V}(\cdot)$ is a nonempty, compact valued, and upper-semicontinuous set-mapping, and all elements of ${\cal V}(\hat{\xi})$ are nonsingular, it follows from \cite[Lemma 7.5.2]{facchinei2007finite} that $\{\|V_j^{-1}\|\}$ is uniformly bounded for sufficiently large $j$. In addition, $\nabla \psi$ is strongly semismooth on $\Re^m$ with respect to ${\cal V}$. By mimicking the proofs in \cite[Theorem 3.5]{zhao2010newton}, we know that
	there exists $\hat{\delta}>0$ such that for all sufficiently large $j$, one has
	\begin{equation}\label{condition_yj}
	\|\xi ^{j}+h^j-\hat{\xi}\|=O(\|\xi ^{j}-\hat{\xi}\|^{1+\tau}) 
	\end{equation}
	and  
	\[
	-\langle \nabla\psi(\xi^j),h^j\rangle \geq \hat{\delta}\|h^j\|^2.
	\]
	By using \eqref{condition_yj}, \cite[Proposition 7]{li2018efficiently} and \cite[Proposition 8.3.18]{facchinei2007finite}, we can derive that for $\mu\in(0,1/2)$, there exists an integer $j_0$ such that for all $j\geq j_0$,
	\[
	\psi(\xi^j+h^j)\leq \psi(\xi^j)+\mu\langle \nabla\psi(\xi^j),h^j\rangle,
	\]
	which implies that  $\xi^{j+1}=\xi^j+h^j$ for all $j\geq j_0$. Combining with \eqref{condition_yj}, we complete the proof.
\end{proof}

\subsection{On the implementation of the {\sc Ssnal} algorithm for the dual problem}

The most time consuming step in our algorithm is in solving the Newton equation \eqref{d-eqn-epsk}. In this subsection, we shall design an efficient procedure to solve it.

Given $y:= {\tilde{x}}/{\sigma}-A^T\xi$, we have already known that
\[
M=\Theta Q\in {\cal M}(y),
\]
where $Q=P_y^T (I_n-B^T(\Sigma B B^T \Sigma)^{\dagger}B)P_y $, and $\Sigma$, $\Theta$ are defined in \eqref{def_sigma}-\eqref{def_theta}, respectively. For the Newton equation \eqref{d-eqn-epsk}, we need to deal with the matrix $AMA^T$. Thus it is important to analyze its structure in order to solve \eqref{d-eqn-epsk} efficiently.

Note that $\Sigma={\rm Diag}\{ \Lambda_1,\cdots,\Lambda_N\}$ is an $N$-block diagonal matrix with each $\Lambda_i$ being either a zero matrix or an identity matrix, and any two consecutive blocks are not of the same type, we can apply Proposition \ref{prop_gamma} to simplify our computation. Let $J:=\{j\mid \Lambda_j=I_{n_j},j=1,\cdots,N\}$. Then we have
\begin{equation*}
Q=P_y^T(H+U_J U_J^T)P_y=P_y^T H P_y+P_y^T U_J  U_J^T P_y,
\end{equation*}
where the $N$-block diagonal matrix $H={\rm Diag}(\Upsilon_1,\cdots,\Upsilon_N)\in\Re^{n\times n}$ is defined by
\begin{equation*}
\Upsilon_i=\left\{
\begin{array}{ll}
\textbf{O}_{n_i+1}, & \mbox{if $i\in J$},\\[5pt]
I_{n_i}, & \mbox{if $i \notin J $ and $i\in\{1,N\}$},\\[5pt]
I_{n_i-1}, & \mbox{otherwise},
\end{array}\right.
\end{equation*}
and $U_J$ is defined in Proposition \ref{prop_gamma}.

Since $M=\Theta Q$ is symmetric, it holds that $\Theta Q=M =M^T= Q\Theta$. Due to the fact that $\Theta$ is a $0$-$1$ diagonal matrix, we have that $\Theta=\Theta^2$ and hence
\[
M=\Theta Q=\Theta(\Theta Q)=\Theta(Q \Theta).
\]
Thus, after plugging in the derived formula for $Q$, we get that
\[
M=\Theta \widetilde{H}\Theta+\Theta P_y^T U_J(P_y^T U_J)^T\Theta,
\]
where the matrix
\begin{equation*}
\widetilde{H}=P_y^TH P_y = {\rm Diag} (P_y^T{\rm diag}(H))
\end{equation*}
is also a $0$-$1$ diagonal matrix. It follows that
\begin{equation*}
AMA^T= A \Theta \widetilde{H} \Theta A^T +A\Theta P_y^T U_J(P_y^T U_J)^T\Theta A^T.
\end{equation*}
Define the following index sets
\begin{equation*}
\alpha:=\Big\{i\mid \theta_i=1,i\in \{1,\cdots,n\}\Big\},\ \gamma:=\Big\{i\mid \widetilde{h}_{i}=1,i\in \alpha \Big\},
\end{equation*}
where $\theta_i$ and $\widetilde{h}_i$ are the $i$-th diagonal entries of $\Theta$ and $\widetilde{H}$, respectively. Then, we immediately get the following formula
\begin{equation*}
A \Theta \widetilde{H} \Theta A^T=A_{\alpha }\widetilde{H}A_{\alpha }^T=A_{\gamma}A_{\gamma}^T,
\end{equation*}
where $A_{\alpha}\in \Re^{m\times|\alpha |}$ and $A_{\gamma}\in \Re^{m\times|\gamma |}$ are two sub-matrices obtained from $A$ by extracting those columns with indices in $\alpha$ and $\gamma$, respectively. Furthermore, we have that
\begin{equation*}
A\Theta P_y^T U_J(P_y^T U_J)^T\Theta A^T=A_{\alpha}P_y^T U_J(P_y^T U_J)^T  A_{\alpha}^T=A_{\alpha}\widetilde{U}\widetilde{U} ^T  A_{\alpha}^T,
\end{equation*}
where $\widetilde{U}\in \Re^{|\alpha|\times t}$ is a sub-matrix obtained from $\Theta(P_y^TU_J)$ by extracting those rows with indices in $\alpha$ and the zero columns in $\Theta(P_y^TU_J)$ being removed. Finally, we obtain that
\begin{equation*}
AMA^T=A_{\gamma}A_{\gamma}^T+A_{\alpha}\widetilde{U}\widetilde{U} ^T  A_{\alpha}^T.
\end{equation*}
Li et al. \cite{li2018efficiently} referred to the above structure of $AMA^T$ and that of $I_m+\sigma AMA^T$ inherited from $M$ as the second-order structured sparsity. They also gave a thorough analysis of computational cost, which is quite similar in our case. Without considering the cost of computing $P_y^T{\rm diag}(H)$ and $P_y^T U_J$, the arithmetic operations of computing $AMA^T$ and $AMA^Td$ for a given vector $d$ are $O(m|\alpha|(m+t))$ and  $O(|\alpha|(m+t))$, respectively. With the use of the Sherman-Morrison-Woodbury formula \cite{golub2012matrix}, the computational cost can be further reduced. We omit the details here.

\section{A semismooth Newton proximal augmented Lagrangian method for the primal problem}\label{sect:ssnalP}

The augmented Lagrangian method ({\sc Alm}) for the dual problem \eqref{D} is expected to be efficient for the case when $m\ll n$, since the semismooth Newton system \eqref{d-eqn-epsk} is of dimension $m$ by $ m$. But for the case when $m \gg n$, as we shall see later in the numerical experiments, it is naturally more efficient to apply the {\sc Alm} on  the primal problem to avoid having to deal with a large $m$ by $m$ linear system in each semismooth Newton iteration. In this section, we will derive a semismooth Newton proximal {\sc Alm} for the primal problem.

First we rewrite the primal problem as
\begin{equation}\tag{P'}\label{P'}
\min_{x\in \Re^n, z\in \Re^n} \ \displaystyle\Big\{\frac{1}{2} \|Ax-b\|^2+  p(z) \mid  x-z=0\Big\}.
\end{equation}
The dual of \eqref{P'} is given as
\begin{equation}\tag{D'}\label{D'}
\max_{y\in \Re^n,v\in \Re^n} \ \displaystyle\Big\{-\frac{1}{2}\| Av-b\|^2-\langle b,Av-b\rangle -p^*(-y)  \mid  A^T(Av-b)-y=0\Big\}.
\end{equation}
Given $\sigma>0$, the augmented Lagrangian function of problem \eqref{P'} is given by
\begin{equation*}
\widetilde{{\cal L}}_\sigma( x,z; y):=\frac{1}{2} \|Ax-b\|^2+  p(z) -\langle y,x-z\rangle +\frac{\sigma}{2}\|x-z\|^2.
\end{equation*}

\subsection{A semismooth Newton proximal augmented Lagrangian method for \eqref{P'}}

The semismooth Newton proximal {\sc Alm} for \eqref{P'} has a similar framework as the {\sc Ssnal} algorithm for \eqref{D}. For simplicity,
we just state Algorithm \ref{pssnal} here without giving the detailed derivation.

\begin{algorithm}
	\caption{\small {\bf: (p-{\sc Ssnal}) A semismooth Newton augmented Lagrangian method for \eqref{P'}} } \label{pssnal}
	\hspace*{0.02in} \raggedright {\bf Input:} $\sigma_0 >0$, $( x^0,z^0,y^0)\in\Re^n\times \Re^n \times \Re^n$, and $k=0$.\\
	
	\begin{algorithmic}[1]
		\STATE  Adapt the semismooth Newton method to approximately compute
		\begin{equation} \label{p:alm-sub}
		x^{k+1}\approx\underset{x\in\Re^n}{\rm argmin} \Big\{ \phi_k (x):=\widetilde{{\cal L}}_{\sigma_k}( x,{\rm Prox}_{p/ \sigma_k}(x-y_k/\sigma_k); y_k)+\frac{1}{2\sigma_k}\|x-x^k\|^2\Big\}
		\end{equation}
		to satisfy the condition \eqref{stopA2} below.
		\STATE $z^{k+1}={\rm Prox}_{p/ \sigma_k}(x^{k+1}-y_k/\sigma_k).$
		\STATE
		$y^{k+1}=y^k -\sigma_k (x^{k+1}-z^{k+1}).$
		\STATE Update $\sigma_{k+1} \uparrow \sigma_\infty\leq \infty$,  $k\leftarrow k+1$, and go to Step 1.
	\end{algorithmic}
\end{algorithm}

In the p-{\sc Ssnal} algorithm, we apply a semismooth Newton method ({\sc Ssn}) to solve \eqref{p:alm-sub} with the following stopping criterion:
\begin{equation}\tag{A2}\label{stopA2}
\|\nabla \phi_k (x^{k+1})\|\leq \frac{\epsilon_k}{\sigma_k}\min(1,\|(x^{k+1},z^{k+1},y^{k+1})-(x^k,z^k,y^k)\|),\ \sum_{k=0}^{\infty} \epsilon_k <\infty.
\end{equation}

The proximal {\sc Alm} has been studied in \cite[Section 5]{rockafellar1976augmented}, which is also called the proximal method of multipliers. We can get the global convergence and local linear convergence of the proximal {\sc Alm} without any difficulty from \cite{rockafellar1976augmented,rockafellar1976monotone,luque1984asymptotic}.

\subsection{A semismooth Newton method for solving \eqref{p:alm-sub}}

Similar to the case of the {\sc Ssnal} algorithm, the most expensive step in each iteration of the p-{\sc Ssnpal} algorithm is in solving the subproblem \eqref{p:alm-sub}. Given $\sigma>0$ and $(\tilde{x},\tilde{y})\in \Re^n\times\Re^n$, we adapt a semismooth Newton method to solve a typical subproblem of the following form
\begin{equation*}
\min_{x\in \Re^n} \phi(x):=\widetilde{{\cal L}}_{\sigma}( x,{\rm Prox}_{p/ \sigma}(x-\tilde{y}/\sigma);\tilde{y})+\frac{1}{2\sigma}\|x-\tilde{x}\|^2.
\end{equation*}
Since $\phi(\cdot)$ is continuously differentiable and strongly convex, the above optimization problem has a unique solution $\hat{x}$. Thus, it is equivalent to solving the following nonsmooth equation
\begin{equation}\label{p-eq-x}
\begin{split}
0=\nabla \phi(x) &= A^T(Ax-b)+\sigma x -\tilde{y}-\sigma{\rm Prox}_{p/ \sigma}(x-\tilde{y}/\sigma)+(x-\tilde{x})/\sigma \\
&=A^T(Ax-b)+(\sigma+1/\sigma) x -(\tilde{y}+\tilde{x}/\sigma)-{\rm Prox}_{p}(\sigma x-\tilde{y}).
\end{split}
\end{equation}

Define the multifunction ${\cal U}:\Re^n\rightrightarrows\Re^{n\times n}$ by
\[
{\cal U}(x):=\Big\{U \in \Re^{n\times n} \mid U= A^T A +\sigma(I_n- M) +\frac{1}{\sigma}I_n,M\in {\cal M}(\sigma x-\tilde{y})\Big\},
\]
where ${\cal M}(\cdot)$ is defined as in \eqref{def_M}. From Theorem \ref{theo_M}, we obtain that ${\cal U}$ is a nonempty, compact valued and upper-semicontinuous multifunction with its elements being symmetric and positive definite. Besides, $\nabla \phi$ is $\gamma$-order semismooth on $\Re^n$ with respect to ${\cal U}$ for all $\gamma>0$. Thus we can apply a semismooth Newton ({\sc Ssn}) method to solve (\ref{p-eq-x}). Similar to the results in Section \ref{ssn_dual}, the {\sc Ssn} method has a fast superlinear or even quadratic convergence.

The efficiency of the  {\sc Ssn} method depends on the generalized Jacobian of $\nabla \phi(\hat{x})$. Next, we characterize the positive definiteness of the elements in ${\cal U}(\hat{x})$ in the following proposition.

\begin{proposition}\label{assum}
	For any
	$
	U=A^T A+\sigma(I_n-M)+\frac{1}{\sigma}I_n\in{\cal U}(\hat{x}),
	$
	we have that
	\[
	\lambda_{\min}(U)\geq\lambda_{\min}(A^T A+\sigma(I_n-M))+\frac{1}{\sigma}\geq\lambda_{\min}(A^T A)+\sigma\lambda_{\min}(I_n-M)+\frac{1}{\sigma}\geq\frac{1}{\sigma}.
	\]
\end{proposition}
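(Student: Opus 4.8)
The plan is to read the displayed chain as three successive applications of Weyl's inequality for the smallest eigenvalue of a sum of symmetric matrices, namely $\lambda_{\min}(X+Y)\geq \lambda_{\min}(X)+\lambda_{\min}(Y)$ whenever $X,Y$ are symmetric. First I would record that every matrix in sight is symmetric: $A^TA$ and $\frac{1}{\sigma}I_n$ are obviously symmetric, and by Theorem \ref{theo_M} the element $M\in{\cal M}(\hat{x})$ is symmetric as well, so $U$ is symmetric and $\lambda_{\min}(U)$ is a well-defined real number. The crucial non-elementary ingredient, also supplied by Theorem \ref{theo_M}, is that both $M$ and $I_n-M$ are positive semidefinite; together with the trivial fact $A^TA\succeq 0$, this is all the structural information the proof needs.

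For the first inequality I would split $U=\bigl(A^TA+\sigma(I_n-M)\bigr)+\frac{1}{\sigma}I_n$ and apply Weyl's inequality with the second summand $\frac{1}{\sigma}I_n$, whose smallest eigenvalue is exactly $\frac{1}{\sigma}$; in fact adding a scalar multiple of the identity merely shifts the whole spectrum, so this step even holds with equality. For the second inequality I would apply Weyl's inequality again, this time to the splitting with $X=A^TA$ and $Y=\sigma(I_n-M)$, using $\lambda_{\min}(\sigma(I_n-M))=\sigma\,\lambda_{\min}(I_n-M)$ since $\sigma>0$. For the final inequality I would simply invoke $\lambda_{\min}(A^TA)\geq 0$ (as $A^TA\succeq 0$) and $\lambda_{\min}(I_n-M)\geq 0$ (as $I_n-M\succeq 0$ by Theorem \ref{theo_M}), so that the two leading terms are nonnegative and only the $\frac{1}{\sigma}$ term survives as a lower bound.

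There is essentially no hard step here: once the positive semidefiniteness of $I_n-M$ is in hand from Theorem \ref{theo_M}, the entire statement reduces to bookkeeping with Weyl's inequality. The one place to be slightly careful is to keep track that $\sigma>0$ throughout, so that scaling by $\sigma$ preserves the direction of the eigenvalue inequalities and $\frac{1}{\sigma}>0$; this guarantees the concluding bound $\lambda_{\min}(U)\geq\frac{1}{\sigma}>0$, which is exactly the positive definiteness of $U$ that the semismooth Newton method requires.
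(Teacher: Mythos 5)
Your proposal is correct and follows essentially the same route as the paper: the paper's proof simply invokes Theorem \ref{theo_M} for the symmetry and positive semidefiniteness of $I_n-M$ and declares the eigenvalue chain to hold ``trivially,'' which is precisely the Weyl-inequality bookkeeping you spell out. The only trivial discrepancy is notational: the element $M$ defining $U\in{\cal U}(\hat{x})$ lies in ${\cal M}(\sigma\hat{x}-\tilde{y})$ rather than ${\cal M}(\hat{x})$, but Theorem \ref{theo_M} applies at any point, so nothing in your argument changes.
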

\begin{proof}
	From Theorem \ref{theo_M}, we know that for any $M\in {\cal M}(\sigma \hat{x}-\tilde{y})$, $I_n-M$ is symmetric and positive semidefinite, which yields that the desired conclusion holds trivially.
\end{proof}

\begin{remark}
	When the columns of $A$ are linearly independent, for any $U\in {\cal U}(\hat{x})$, we have that
	\[
	\lambda_{\min}(U)\geq\lambda_{\min}(A^T A)+\sigma\lambda_{\min}(I_n-M)+\frac{1}{\sigma}\geq\lambda_{\min}(A^T A)+\frac{1}{\sigma}.
	\]
	In that case, $U$ is positive definite if we do not add the proximal term $\frac{1}{2\sigma_k}\|x-x^k\|^2$ in \eqref{p:alm-sub}.
	Since here we mainly focus on the case when $m\gg n$, the columns of $A$ are very likely to be linearly independent.
\end{remark}

\section{Numerical experiments}\label{sect:NumRes}

In this section, we will evaluate the performance of our {\sc Ssnal} algorithm for solving the clustered lasso problems on the high-dimension-low-sample setting and the high-sample-low-dimension setting, respectively. For simplicity, we use the following abbreviations. {\sc Ssnal} represents the semismooth Newton augmented Lagrangian method, {\sc Admm} represents the alternating direction method of multipliers, {\sc iAdmm} represents the inexact {\sc Admm}, {\sc LAdmm} represents the linearized {\sc Admm} and {\sc Apg} represents the accelerated proximal gradient method. We implemented {\sc Admm}, {\sc iAdmm} and {\sc LAdmm} in MATLAB with the step-length set to be $1.618$.

In our experiments, the regularization parameters $\beta$ and $\rho$ in the clustered lasso problem \eqref{given-P} are chosen to have the form
\[
\beta=\alpha_1 \|A^T b\|_{\infty},\ \rho=\alpha_2 \beta,
\]
where $0<\alpha_1<1$ and $\alpha_2>0$. To produce reasonable clustering results, we choose $\alpha_2 =O( 1/n)$ to make sure that the two penalty terms have the same magnitude of influence.

We stop the tested algorithms according to some specified stopping criteria, which will be given in the following subsections. Besides, the algorithms will be stopped when they reach the maximum computation time of $3$ hours or the pre-set maximum number of iterations ($100$ for {\sc Ssnal}, and $20000$ for {\sc Admm}, {\sc iAdmm}, {\sc LAdmm}, {\sc Apg}). All our computational results are obtained by running MATLAB (version 9.0) on a windows workstation (12-core, Intel Xeon E5-2680 @ 2.50GHz, 128 G RAM).

\subsection{First order methods}\label{sect:FOMs}

For comparison purpose, we summarize two types of first-order methods that are suitable for solving the clustered lasso problem. An important point to mention here is that the proximal mapping given in Section \ref{subsection_prox_mapping} plays a crucial role in the projection steps of these methods. Indeed, the new characteristic of the clustered lasso regularizer vastly improves the performance of the first-order methods as the computation of the proximal mapping is now much cheaper.

\paragraph{Alternating direction method of multipliers for \eqref{D}}
We start by adapting the widely-used alternating direction method of multipliers ({\sc Admm}) \cite{eckstein1992douglas,gabay1976dual,glowinski1975approximation} for solving \eqref{D}, which can be described as Algorithm \ref{dadmm}.

\begin{algorithm}
	\caption{\small {\bf : (d-{\sc Admm}) An alternating direction method of multipliers for \eqref{D}}}\label{dadmm}
	\hspace*{0.02in} \raggedright {\bf Input:} $\kappa \in (0, (1+ \sqrt{5})/2)$, $\sigma >0$, $x^0\in\Re^{n}$, $u^0 \in \Re^n$, and $k=0$.\\
	
	\begin{algorithmic}[1]
		\STATE  Compute
		\begin{equation}\label{ADMM_xi}
		\xi^{k+1}\approx \underset{\xi\in\Re^m}{\mbox{argmin}}\,{\cal L}_{\sigma}(\xi,u^{k};x^k).
		\end{equation}
		\STATE $
		u^{k+1}= \underset{u\in\Re^{n}}{\mbox{argmin}}\, {\cal L}_{\sigma}( \xi^{k+1},u;x^{k})={\rm Prox}_{p^*/\sigma}(-A^T \xi^{k+1}+ {x^k}/{\sigma}).
		$
		\STATE $x^{k+1} = x^k-\kappa \sigma (A^T \xi^{k+1}+u^{k+1})$.
		\STATE  $k\leftarrow k+1$, and go to Step 1.
	\end{algorithmic}
\end{algorithm}

Note that in practice, $\kappa$ should be chosen to be at least $1$ for faster convergence. For the subproblem \eqref{ADMM_xi}, the optimality condition that $\xi^{k+1}$ must satisfy is given by
\begin{equation*}
(I_m+\sigma A A^T)\xi =-b+ A(x^k-\sigma u^{k}).
\end{equation*}
The linear system of equation of the form $(I_m+\sigma A A^T)\xi = h$ has to be solved repeatedly with a different right-hand side vector $h$. One can solve this linear system directly or use an iterative solver such as the preconditioned conjugate gradient ({\sc Pcg}) method.

The convergence results of the classical {\sc Admm} with the subproblems solved exactly have been discussed in \cite{fazel2013hankel}, while the convergence analysis of the inexact {\sc Admm} can be found in \cite{chen2017efficient}.
The linearized {\sc Admm} algorithm \cite{zhang2011unified} can also be used to solve this problem by linearizing the quadratic term in \eqref{ADMM_xi}. It is worthwhile to mention that inexact {\sc Admm} and linearized {\sc Admm} are often used in the case when $m$ is large.

\paragraph{Alternating direction method of multipliers for \eqref{P'}}
Next we present the {\sc Admm} algorithm for \eqref{P'}, which is described as Algorithm \ref{pADMM}.
\begin{algorithm}
	\caption{\small {\bf : (p-{\sc Admm}) An alternating direction method of multipliers for \eqref{P'} }}\label{pADMM}
	\hspace*{0.02in} \raggedright {\bf Input:} $\kappa \in (0, (1+ \sqrt{5})/2)$, $\sigma >0$, $z^0\in\Re^{n}$, $y^0 \in \Re^n$, and $k=0$.\\
	
	\begin{algorithmic}[1]
		\STATE  Compute
		\begin{equation}\label{pADMM_x}
		x^{k+1} \approx \underset{x\in \Re^n}{\mbox{argmin}}\, \widetilde{{\cal L}}_{\sigma}( x,z^{k}; y^k).
		\end{equation}
		\STATE $z^{k+1} = \underset{z\in\Re^{n}}{\mbox{argmin}} \,\widetilde{{\cal L}}_{\sigma}( x^{k+1},z; y^k) ={\rm Prox}_{p/ \sigma}(x^{k+1}-y_k/\sigma)$.
		\STATE $y^{k+1} = y^k-\kappa \sigma (x^{k+1}-z^{k+1})$.
		\STATE  $k\leftarrow k+1$, and go to Step 1.
	\end{algorithmic}
\end{algorithm}

Note that, for the subproblem \eqref{pADMM_x}, $x^{k+1}$ is the solution of the following linear system
\begin{equation*}
(\sigma I_n +A^T A )x = A^Tb+\sigma(z^{k}+ {y_k}/{\sigma}).
\end{equation*}
Direct solvers and iterative solvers both can be used here.

\paragraph{An accelerated proximal gradient method of \eqref{P}}
Since the function $\|Ax-b\|^2/2$ in \eqref{P} has Lipschitz continuous gradient (with Lipschitz constant $L$, which is the largest eigenvalue of $A^T A$), one can attempt to use the accelerated proximal gradient ({\sc Apg}) method in \cite{beck2009fast} to solve \eqref{P}. The basic template of the {\sc Apg} algorithm is given in Algorithm \ref{algo-apg} below.

\begin{algorithm}
	\caption{\small {\bf : ({\sc Apg}) An accelerated proximal gradient method for \eqref{P} }}\label{algo-apg}
	\hspace*{0.02in} \raggedright {\bf Input:} $\varepsilon>0$, $w^0=x^0\in\Re^n$, $t_0=1$, and $k=0$.\\
	
	\begin{algorithmic}[1]
		\STATE  Compute
		\begin{equation*}
		x^{k+1}
		={\rm Prox}_{p/L}(w^k-L^{-1}A^T(Aw^k-b) ).
		\end{equation*}
		\STATE Set $t_{k+1}= (1+\sqrt{1+4 t_k^2}\,)/2.$
		\STATE Update $w^{k+1}=x^{k+1}+(t_k-1)/t_{k+1}(x^{k+1}-x^k).$
		\STATE $k\leftarrow k+1$, and go to Step 1.
	\end{algorithmic}
\end{algorithm}

It is clear that the practical performance of the {\sc Apg} algorithm hinges crucially on whether one can compute the proximal mapping ${\rm Prox}_{\nu p}(y)$ for any $y\in \Re^n$ and $\nu>0$ efficiently. Fortunately, we have provided an analytical solution to this problem in Section \ref{subsection_prox_mapping}.

\subsection{Stopping criteria}
Since the primal problem \eqref{P} is unconstrained, it is reasonable to measure the accuracy of an approximate optimal solution $(\xi,u,x)$ for problem \eqref{D} and problem \eqref{P} by the relative duality gap and dual infeasibility. Specifically, let
\begin{equation*}
\mbox{pobj}:=\frac{1}{2}\|Ax-b\|^2+p(x),\quad
\mbox{dobj}:=-\frac{1}{2}\|\xi\|^2- \langle b,\xi \rangle
\end{equation*}
be the primal and dual objective function values. The relative duality gap and the relative dual infeasibility are given as
\begin{equation*}
\eta_{gap}:=\frac{|\mbox{pobj}-\mbox{dobj}|}{1+|\mbox{pobj}|+|\mbox{dobj}|},\quad
\eta_{D}:=\frac{\|A^T\xi+u\|}{1+\|u\|}.
\end{equation*}
Besides, the relative KKT residual of the primal problem \eqref{P}
\begin{equation}\label{define_eta}
\eta_{kkt} = \frac{\|x-{\rm Prox}_p(x-A^T(Ax-b))\|}{1+\|x\|+\|A^T(Ax-b)\|}
\end{equation}
can be adopted to measure the accuracy of an approximate optimal solution $x$.

\subsection{Numerical results for UCI datasets}
In this subsection, we conduct some experiments on the same large-scale UCI datasets $(A,b)$ as in \cite{li2018efficiently} that are originally obtained from the LIBSVM datasets \cite{chang2011libsvm}. All instances are in the high-dimension-low-sample setting. According to what we have discussed in Section \ref{sect:ssnal}, the dual approaches are better choices since we have $m\ll n$ in this setting.

For given tolerance $\epsilon$, we will terminate the {\sc Ssnal} algorithm when
\begin{equation}\label{maxeta}
\max\{\eta_{gap},\eta_{D},\eta_{kkt}\}\leq\epsilon.
\end{equation}

Table \ref{table_UCI_ssnal} gives the numerical results for {\sc Ssnal} when solving the clustered lasso problem \eqref{given-P} on UCI datasets. In the table, $m$ and $n$ denotes the number of samples and features, respectively. We use ${\rm nnz}(x)$ to denote the number of nonzeros in the solution $x$ using the following estimation
\[
{\rm nnz}(x):=\min\{k |\sum_{i=1}^k |\hat{x}_i|\geq 0.99999 \|x\|_1\},
\]
where $\hat{x}$ is obtained by sorting $x$ such that $|\hat{x}_1|\geq |\hat{x}_2| \geq \cdots \geq |\hat{x}_n|$. We also use ${\rm gnnz}(x)$ to denote the number of groups in the solution, where the pairwise ratios among the sorted elements in each group are between $5/6$ and $6/5$. In order to get reasonable grouping results, we regard the elements with absolute value below $10^{-4}$ to be in the same group.

In order to get a reasonable number of non-zero elements in the optimal solution $x$, we choose $\alpha_1 \in \{10^{-6},10^{-7}\}$ for the problems {\rm E2006.train} and {\rm E2006.test}, $\alpha_1 \in \{10^{-2},10^{-3}\}$ for problem {\rm triazines4}, $\alpha_1 \in \{10^{-5},10^{-6}\}$ for problem {\rm bodyfat} and $\alpha_1 \in \{10^{-3},10^{-4}\}$ for the other instances. As we mention before, when $\alpha_2 =O( 1/n)$, we can get reasonable clustering results. In total, we tested $54$ instances.

From Table \ref{table_UCI_ssnal}, we see that the {\sc Ssnal} algorithm is efficient and robust against different parameter selections. It can be observed that all the $54$ tested instances are successfully solved by {\sc Ssnal} in about $5$ minutes. In fact, for most of the cases, they are solved in less than one minute.

\begin{center}\scriptsize
	\renewcommand{\multirowsetup}{\centering}
	\renewcommand\arraystretch{1.2}
	\begin{longtable}{|c|c|c|c|c|c|c|}
		\caption{The performance of the {\sc Ssnal} algorithm on UCI datasets with different parameter selections. We terminate {\sc Ssnal} when $\max\{\eta_{gap},\eta_{D},\eta_{kkt}\}
			\leq 10^{-6}$. ${\rm nnz}(x)$ and ${\rm gnnz}(x)$ are obtained by {\sc Ssnal}. Time is shown in the format of (hours:minutes:seconds).} \label{table_UCI_ssnal}\\
		\hline
		proname   ($m$; $n$)  &  $\alpha_1$; $\alpha_2$   & ${\rm nnz}(x)$; ${\rm gnnz}(x)$ & $\mbox{pobj}$ & $\eta_{kkt}$ & $\max\{\eta_{gap},\eta_{D}\}$ & time\\
		\hline
		\endfirsthead
		
		\multicolumn{7}{c}{{\bfseries \tablename\ \thetable{} -- continued from previous page}} \\
		
		\hline
		proname   ($m$; $n$)  &  $\alpha_1$; $\alpha_2$   & ${\rm nnz}(x)$; ${\rm gnnz}(x)$ & $\mbox{pobj}$ & $\eta_{kkt}$ & $\max\{\eta_{gap},\eta_{D}\}$ & time\\
		\hline
		\endhead
		
		\hline
		\multicolumn{7}{|r|}{{Continued on next page}} \\
		\hline
		\endfoot
		
		\hline
		\endlastfoot
		
		\input{clulasso_UCI_ssnal.dat}
	\end{longtable}
\end{center}

For comparison, we also conduct numerical experiments on {\sc Admm}, {\sc iAdmm}, {\sc LAdmm} and {\sc Apg}. We select two pairs of parameters for each dataset when computing. Let $\mbox{pobj}_{\mbox{\sc Ssnal}}$ be the optimal primal objective value obtained by {\sc Ssnal} with stopping criterion \eqref{maxeta}. Since the minimization problem \eqref{P} is unconstrained, it is reasonable to terminate a first-order algorithm when
\begin{equation}\label{definerelgap}
\eta_{rel}:=\frac{\mbox{pobj}-\mbox{pobj}_{\mbox{\sc Ssnal}}}{1+|\mbox{pobj}_{\mbox{\sc Ssnal}}|}\leq \epsilon_2,
\end{equation}
where $\mbox{pobj}$ is the primal objective value obtained by the first-order algorithm and $\epsilon_2$ is a given tolerance. Here, we treat $\mbox{pobj}_{\mbox{\sc Ssnal}}$ as an accurate approximate optimal objective value to \eqref{P} and stop the other algorithms by using the relative difference between the obtained primal objective value and $\mbox{pobj}_{\mbox{\sc Ssnal}}$.

Table \ref{table_UCI_com4} and Table \ref{table_UCI_com6} show the numerical results. In the tables, $t_{\mbox{\sc Ssnal}}$ represents the time needed by {\sc Ssnal} when using the stopping criterion \eqref{maxeta} with $\epsilon=10^{-6}$. We test two different choices of $\epsilon_2$. The results for $\epsilon_2=10^{-4}$ are shown in Table \ref{table_UCI_com4} and the results for $\epsilon_2=10^{-6}$ are shown in Table \ref{table_UCI_com6}.

When $\epsilon_2=10^{-4}$, we can see from Table \ref{table_UCI_com4} that {\sc Admm} is able to solve $18$ instances and {\sc iAdmm} can solve $17$ instances successfully. While {\sc LAdmm} and {\sc Apg} can solve $16$ and $12$ instances successfully, respectively. When $\epsilon_2=10^{-6}$, we can see from Table \ref{table_UCI_com6} that {\sc Admm} is able to solve $16$ instances and {\sc iAdmm} can solve $15$ instances. While for {\sc LAdmm} and {\sc Apg}, they can only solve $11$ and $3$ instances, respectively. We note that {\sc iAdmm} and {\sc LAdmm} are computationally more advantageous than {\sc Admm} when solving instances with large $m$, thus it is not surprising that {\sc Admm} is more efficient than {\sc iAdmm} and {\sc LAdmm} in solving the tested instances for which $m$ is not too large.

By comparing the computation time between {\sc Ssnal} and the first-order algorithms, we can see that {\sc Ssnal} takes much less time than the first-order algorithms but get much better results in almost all cases. If we require a high accuracy, then the first-order methods will take much longer time than {\sc Ssnal} and may not even achieve the required accuracy.

\begin{center}\scriptsize
	\renewcommand{\multirowsetup}{\centering}
	\renewcommand\arraystretch{1.3}
	\begin{longtable}{|c|c|c|c|c|} 	
		\caption{The performance of various algorithms on UCI datasets. In the table, "b" = {\sc Admm}, "c" ={\sc iAdmm}, "d" = {\sc LAdmm}, "e" = {\sc Apg}. We terminate the first-order algorithms when $\eta_{rel}\leq 10^{-4}$. $t_{\mbox{\sc Ssnal}}$ represents the time needed by {\sc Ssnal} when using stopping criterion $\max\{\eta_{gap},\eta_{D},\eta_{kkt}\}\leq 10^{-6} $. Time is shown in the format of (hours:minutes:seconds). } \label{table_UCI_com4}\\
		\hline
		& &  &  $\eta_{rel}$ & time\\
		\hline
		proname   & $\alpha_1$; $\alpha_2$     &  $t_{\mbox{\sc Ssnal}}$  & b $|$ c $|$ d $|$ e & b $|$ c $|$ d $|$ e \\
		\hline
		\endfirsthead
		
		\multicolumn{5}{c}{{\bfseries \tablename\ \thetable{} -- continued from previous page}} \\
		
		\hline
		& &  & $\eta_{rel}$ & time\\
		\hline
		proname   & $\alpha_1$; $\alpha_2$     &  $t_{\mbox{\sc Ssnal}}$  & b $|$ c $|$ d $|$ e & b $|$ c $|$ d $|$ e \\
		\hline
		\endhead
		
		\hline
		\multicolumn{5}{|r|}{{Continued on next page}} \\
		\hline
		\endfoot
		
		\hline
		\endlastfoot
		
		\input{clulasso_UCI_comparison4.dat}
	\end{longtable}
\end{center}

\vspace{-1cm}

\begin{center}\scriptsize
	\renewcommand{\multirowsetup}{\centering}
	\renewcommand\arraystretch{1.3}
	\begin{longtable}{|c|c|c|c|c|} 	
		\caption{Same in Table \ref{table_UCI_com4} but we terminate the first-order algorithms when $\eta_{rel}\leq 10^{-6}$. Time is shown in the format of (hours:minutes:seconds). } \label{table_UCI_com6}\\
		\hline
		& &  &  $\eta_{rel}$ & time\\
		\hline
		proname   & $\alpha_1$; $\alpha_2$     &  $t_{\mbox{\sc Ssnal}}$  & b $|$ c $|$ d $|$ e & b $|$ c $|$ d $|$ e \\
		\hline
		\endfirsthead
		
		\multicolumn{5}{c}{{\bfseries \tablename\ \thetable{} -- continued from previous page}} \\
		
		\hline
		& &  & $\eta_{rel}$ & time\\
		\hline
		proname   & $\alpha_1$; $\alpha_2$     &  $t_{\mbox{\sc Ssnal}}$  & b $|$ c $|$ d $|$ e & b $|$ c $|$ d $|$ e \\
		\hline
		\endhead
		
		\hline
		\multicolumn{5}{|r|}{{Continued on next page}} \\
		\hline
		\endfoot
		
		\hline
		\endlastfoot
		
		\input{clulasso_UCI_comparison6.dat}
	\end{longtable}
\end{center}

\begin{figure}[h]
	\centering\includegraphics[width=4 in]{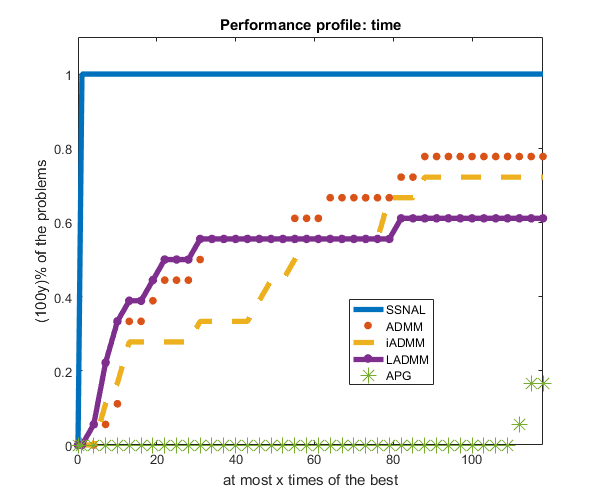}
	\caption{Performance profiles for {\sc Ssnal}, {\sc Admm}, {\sc iAdmm}, {\sc LAdmm} and {\sc Apg} on UCI datasets. Results of {\sc Ssnal} are obtained by setting $\max\{\eta_{gap},\eta_{D},\eta_{kkt}\} \leq 10^{-6}$, and results of {\sc Admm}, {\sc iadmm}, {\sc Ladmm} and {\sc Apg} are obtained by setting $\eta_{rel}\leq 10^{-6}$.}\label{figure_UCI}
\end{figure}

We also present in Figure \ref{figure_UCI} the performance profiles of {\sc Ssnal}, {\sc Admm}, {\sc iAdmm}, {\sc LAdmm} and {\sc Apg} for all the tested problems. In the figure, the results for {\sc Ssnal} are obtained by setting $\max\{\eta_{gap},\eta_{D},\eta_{kkt}\} \leq 10^{-6}$, and the results for {\sc Admm}, {\sc iAdmm}, {\sc LAdmm} and {\sc Apg} are obtained by $\eta_{rel}\leq 10^{-6}$. Thus the accuracy of {\sc Ssnal} is higher than the other algorithms in this sense. Recall that a point $(x,y)$ is in the performance profile curve of a method if and only if it can solve $(100y\%)$ of all tested instances successfully in at most $x$ times of the best methods for each instance. It can be seen that {\sc Ssnal} outperforms all the other methods by a very large margin.

In terms of efficiency and robustness, we can see that {\sc Ssnal} performs much better than all the other first-order methods on these difficult large-scale problem. For example, {\sc Ssnal} only needs about $2$ minutes to produce a solution with the required accuracy such that $\max\{\eta_{gap},\eta_{D},\eta_{kkt}\} \leq 10^{-6}$ for the problem {\rm triazines4}, while all first-order algorithms spend over $1$ hours (3 hours for {\sc iAdmm}) to only produce poor accuracy solutions (with $\eta_{rel}\approx 10^{-4}$) that are much less accurate than {\sc Ssnal}. One can see from Table \ref{table_UCI_com4} and \ref{table_UCI_com6} that {\sc Ssnal} can easily be $5$ to $20$ times faster than the best first-order method on different instances such as {\rm triazines4}, {\rm pyrim5}.

\subsection{Numerical results for synthetic data}
Next we test our algorithms in the high-sample-low-dimension setting. The data used in this subsection are generated randomly from the following true model
\[
b = A x+\varsigma\epsilon,\ \epsilon\sim N({\bf 0}, I).
\]
In the experiments, the rows of $A \in \Re^{m\times n}$ are generated randomly from the multivariate normal distribution $N({\bf 0},\Sigma)$. Here $\Sigma\in \Re^{n\times n}$ is a given symmetric matrix such that $\Sigma_{ij} = \hat{\gamma}^{|i-j|}$ for $i,j=1,\ldots,p$ and $\hat{\gamma}$ is a given parameter. The tuning parameters $\beta$ and $\rho$ in \eqref{given-P} are chosen based on numerical experience.

The examples of $x_0$ presented below were mainly constructed based on the simulation scenarios used in \cite{zou2005regularization, she2010sparse, petry2011pairwise}.  As we want to focus on large-scale problems, we introduce a parameter $k$. In the first six scenarios, we use $k$ to repeat every component of $x_0\in\Re^{n_0}$ by $k$ times consecutively to construct the actual $x\in \Re^{n}$, where $n=n_0 k$, while in the last case, we use $k$ in another strategy which will be explained later. The corresponding number of observations is chosen to be $\max\{80000,0.5nk\}$. We use $80\%$ of the observations to do the training. Instead of using a specified noise level $\varsigma$ for each case, we set $\varsigma = 0.1\|Ax\|/\|\epsilon\|$ for all examples.

\begin{enumerate}
	\item The first setting is specified by the parameter vector
	\[
	x^0\;=\;(3,1.5,0,0,0,2,0,0)^T.
	\]
	The correlation between the $i$-th and $j$-th predictor is
	\[
	{\rm corr} (i,j)=0.9^{|i-j|},\ \forall \ i,j\in\{1,...,8\}.
	\]
	
	\item In this setting, we have $n_0=20$ predictors. The parameter vector is structured into blocks:
	\begin{equation*}
	x^0\;=\;(\underbrace{0,...,0}_5,\underbrace{2,...,2}_5,\underbrace{0,...,0}_5,\underbrace{2,...,2}_5)^T.
	\end{equation*}
	The correlation between $i$-th and $j$-th predictor is given by ${\rm corr}(i,j)=0.3$.
	
	\item This setting consists of $n_0=20$ predictors. The parameter vector is given by
	\begin{equation*}
	x^0\;=\;(5,5,5,2,2,2,10,10,10,\underbrace{0,...,0}_{11})^T.
	\end{equation*}
	Within each of the first three blocks of 3 variables, the correlation between the two predictors  is $0.9$, but there is no correlation among different blocks.
	
	\item The fourth setting consists of $n_0=13$ predictors. The parameter vector is structured into many small clusters:
	\[
	x^0 \;=\; (0,0,-1.5,-1.5,-2,-2,0,0,1,1,4,4,4)^T.
	\]
	The correlation between the $i$-th and $j$-th predictor is
	\[
	{\rm corr} (i,j)=0.5^{|i-j|},\ \forall \ i,j\in\{1,...,13\}.
	\]

	\item The fifth setting is the same as the fourth one, but with a higher correlation between the predictors where ${\rm corr}(i,j)=0.9^{|i-j|},\ \forall \ i,j\in\{1,...,13\}$.
	
	\item In the sixth setting, we have $n_0=16$ predictors. The parameter vectors is structured such that big clusters coexist with small ones:
	\[
	x^0\;=\;(\underbrace{0,...,0}_3,\underbrace{4,...,4}_5,\underbrace{-4,...,-4}_5,2,2,-1)^T.
	\]
	The predictors are possibly negatively correlated: ${\rm corr}(i,j)=(-1)^{|i-j|}0.8$.
	
	\item (Another strategy to use $k$) In the last setting, we use another strategy to construct the example. First we generate a $100$ by $1$ vector $\nu\sim N({\bf 0}, I_{100})$, they we create a histogram bar chart of the elements of vector $\nu$. To be specific, we bin the elements of $\nu$ into $20$ equally spaced containers and return a $20$ by $1$ vector $x_0$ as the number of elements in each container.
	Let
	\[
	x = (\underbrace{x_0,...,x_0}_{2k})^T.
	\]
	The correlation between $i$-th and $j$-th predictor is given by ${\rm corr}(i,j)=0.5$.
\end{enumerate}

For all seven examples, we tested on large-scale problems by setting $k=100$. Note that all instances in this subsection are in the high-sample-low-dimension setting, that is $m\gg n$, just as discussed in Section \ref{sect:ssnalP}, the primal approach is a better choice. In the following text, we use "p-" to represent the primal approach and "d-" to represent the dual approach. For comparison, we terminate all the algorithms when the relative KKT residual $\eta_{kkt}\leq 10^{-6}$.

\begin{figure}[htbp]
	\addtocounter{figure}{1}
	\flushleft
	\subfigure[eg1]{
		\label{fig_eg1}
		\includegraphics[width=0.345\textwidth]{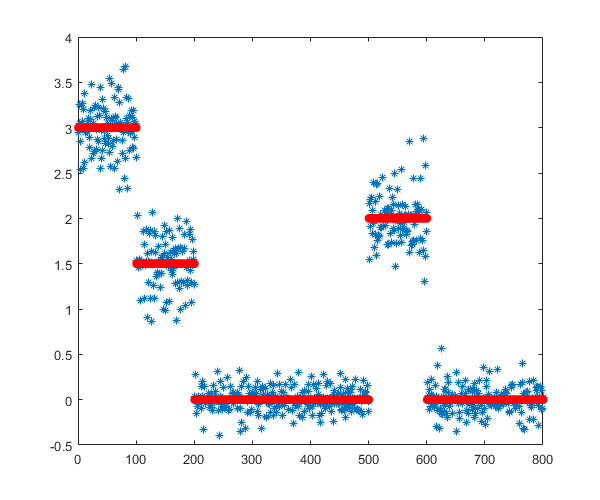}}
	\hspace{-4.5ex}
	\subfigure[eg2]{
		\label{fig_eg2}
		\includegraphics[width=0.345\textwidth]{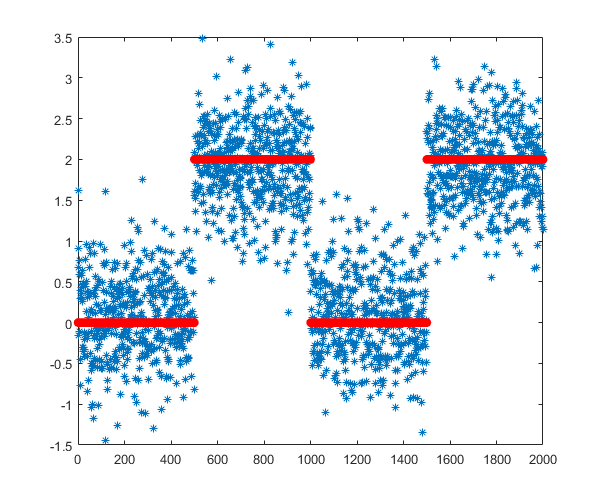}}
	\hspace{-4.5ex}
	\subfigure[eg3]{
		\label{fig_eg3}
		\includegraphics[width=0.345\textwidth]{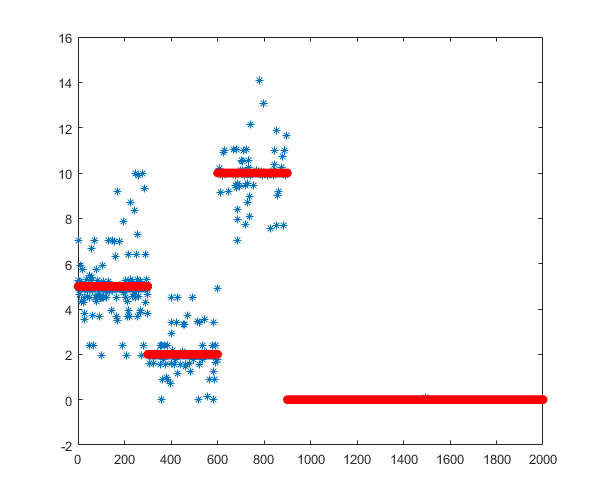}}
	\hspace{-4.5ex}
	\subfigure[eg4]{
		\label{fig_eg4}
		\includegraphics[width=0.345\textwidth]{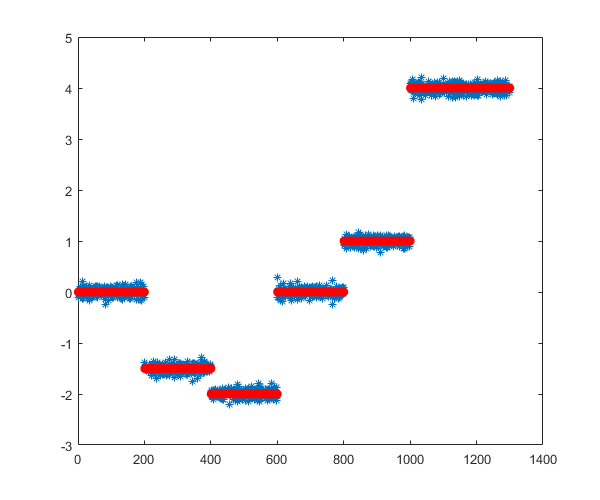}}
	\hspace{-4.5ex}
	\subfigure[eg5]{
		\label{fig_eg5}
		\includegraphics[width=0.345\textwidth]{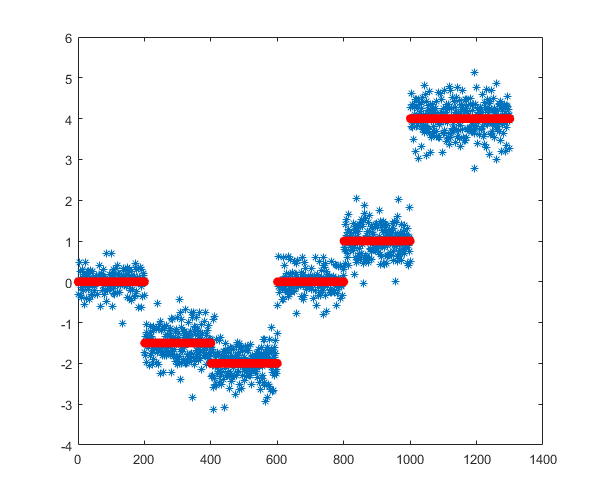}}
	\hspace{-4.5ex}
	\subfigure[eg6]{
		\label{fig_eg6}
		\includegraphics[width=0.345\textwidth]{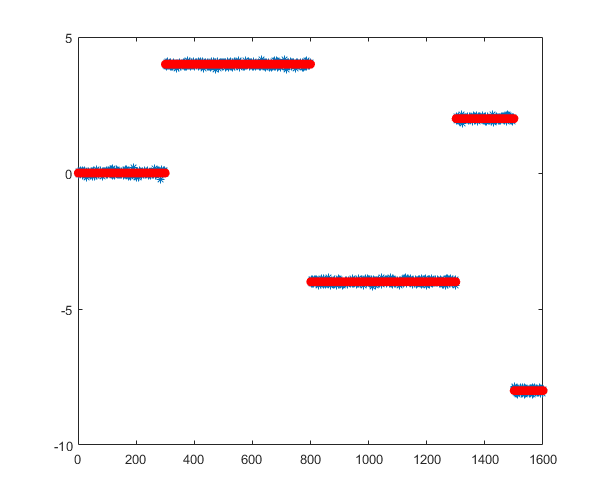}}
	\hspace{-4.5ex}
	\subfigure[eg7]{
		\label{fig_eg7}
		\includegraphics[width=0.345\textwidth]{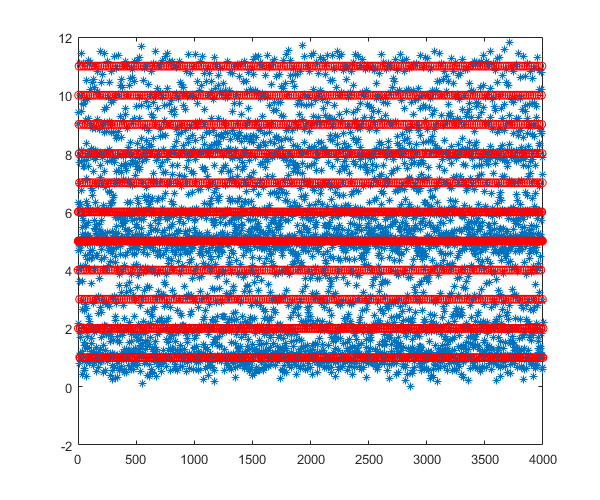}}
	\caption{Recovery results obtained by the p-{\sc Ssnal} algorithm when solving the clustered lasso model on seven synthetic datasets. The algorithm is terminated by setting $\eta_{kkt} \leq 10^{-6}$. In the figures, the red dots represent the actual $x$, and the blue dots represent the solution we obtained by p-{\sc Ssnal}.}
	\label{random_fig}
\end{figure}

Figure \ref{random_fig} shows the recovery results for the seven examples, where the red dots represent the actual $x$, and the blue dots represent the solution we obtained by p-{\sc Ssnal}. As we can see from the figure, the clustered lasso model can recover the group structure of the true regression parameter vector successfully.

As for the purpose of comparing the computational time, we can refer to Table \ref{table_random} for the details. Since d-{\sc iAdmm} and d-{\sc LAdmm} can deal with the case when $m$ is large, we also apply d-{\sc iAdmm} and d-{\sc LAdmm} here. In the table, we can see that p-{\sc Ssnal}, p-{\sc Admm} and d-{\sc LAdmm} can solve all the instances efficiently and accurately. Besides, d-{\sc iAdmm} also gives a good performance except for eg6. The slightly poorer performance of d-{\sc iAdmm} is reasonable since in these cases, $m$ is large that the linear systems needed to be solved in the algorithm are huge. As for p-{\sc Apg}, the numerical results of eg2 and eg6 are not so good since the corresponding Lipschitz constants in the problem are large.

\begin{center}\footnotesize
	\renewcommand{\multirowsetup}{\centering}
	\renewcommand\arraystretch{1.3}
	\begin{longtable}{|c|c|c|}
		\caption{The performance of various algorithms on the synthetic datasets. In the table, "a" = p-{\sc Ssnal}, "b" = p-{\sc Admm}, "c" = p-{\sc Apg}, "d" = d-{\sc iAdmm}, "e" = d-{\sc LAdmm}. We terminate the algorithms when $\eta_{kkt}\leq 10^{-6}$. MSE denotes the MSE obtained by p-{\sc Ssnal}. ${\rm nnz}(x)$ and ${\rm gnnz}(x)$ are obtained by p-{\sc Ssnal}. Time is shown in the format of (hours:minutes:seconds).} \label{table_random} \\
		\hline
		&$\eta_{kkt}$ & time\\
		\hline
		proname   ($m$; $n$) &a $|$ b $|$ c $|$ d $|$ e & a $|$ b $|$ c $|$ d $|$ e \\
		\hline
		\endfirsthead
		
		\multicolumn{3}{c}{{\bfseries \tablename\ \thetable{} -- continued from previous page}} \\
		
		\hline
		&$\eta_{kkt}$ & time \\
		\hline
		proname   ($m$; $n$) &a $|$ b $|$ c $|$ d $|$ e & a $|$ b $|$ c $|$ d $|$ e  \\
		\hline
		\endhead
		
		\hline
		\multicolumn{3}{|r|}{{Continued on next page}} \\
		\hline
		\endfoot
		
		\hline
		\endlastfoot
		
		\input{clulasso_random.dat}
	\end{longtable}
\end{center}

As we can see, some of the first-order methods are comparable to p-{\sc Ssnal} in these cases in that our new formulation of the clustered lasso regularizer vastly improves the performance of the first-order methods in the projection steps.

\section{Conclusion}
In this paper, we reformulate the clustered lasso regularizer as a weighted ordered-lasso regularizer. Based on the new formulation, we are able to derive a highly efficient algorithm for computing the proximal mapping in $O(n\log (n))$ operations that is crucial for designing efficient first-order and second-order algorithms for solving the clustered lasso problem. Based on efficiently computing the generalized Jacobian of the proximal mapping, we design extremely fast semismooth Newton augmented Lagrangian algorithms, i.e., {\sc Ssnal}, for solving the clustered lasso problem or its dual. Our efficient implementation of the {\sc Ssnal} algorithm heavily relies on the special structure that we have uncovered for the clustered lasso regularizer. The numerical experiments on large-scale real data and synthetic data show the great advantages of our algorithms in comparison with other well designed first-order methods for the clustered lasso problem.

\end{document}